\documentclass[submission,copyright,creativecommons]{eptcs}
\providecommand{\event}{AiML 2026} 

\usepackage{aiml26}

\usepackage{iftex}
\usepackage{newtxtext,newtxmath}

\ifpdf
  \usepackage[T1]{fontenc}        
\else
  \usepackage{breakurl}           
\fi

\usepackage{amsmath}
\usepackage{mathtools}
\usepackage{tikz}
  \usetikzlibrary{arrows.meta}
\usepackage{doi}
\usepackage{multirow}

\usepackage[mathcal]{euscript}
\usepackage{mathrsfs}

\newcommand{\mc}{\mathcal}

\newcommand{\mo}{\mathfrak}
\newcommand{\B}{\mo{B}}

\newcommand{\Zin}[2]{(#1,#2)\in Z}
\newcommand{\Z}{Z}
\newcommand{\fns}{\footnotesize}
\newcommand{\ac}{R}

\newcommand{\finset}{_\mathrm{fin}}

\newcommand{\myitem}[1]{
  \renewcommand{\labelenumi}{(\theenumi) }
  \renewcommand{\theenumi}{#1}
  \item
}

\newcommand{\bisim}{\rightleftharpoons}
\newcommand{\logeq}{\leftrightsquigarrow}
\newcommand{\nlogeq}{\mathrel{\kern3pt\not\kern-3pt\leftrightsquigarrow}}
\renewcommand{\phi}{\varphi}
\renewcommand{\iff}{\quad\text{iff}\quad}
\newcommand{\und}{\underline}
\DeclareMathOperator{\st}{st}

\newcommand{\Var}{\mathrm{Var}}
\newcommand{\Con}{\mathrm{Cnst}}
\newcommand{\Pred}{\mathrm{Pred}}
\newcommand{\Prop}{\mathrm{Prop}}
\newcommand{\II}{\mathscr{I}}
\newcommand{\IL}{\mathscr{I\!\!L}}   
\newcommand{\CL}{\mathscr{C\!\!L}}   
\newcommand{\LL}{\mathcal{L}}        
\newcommand{\ML}{\mathcal{L}_{\Box\Diamond}}        
\newcommand{\IK}{\mathsf{IK}}
\newcommand{\MOC}{\mathrm{MOC}}

\newcommand{\consec}[2]{\langle #1:#2 \rangle}

\newcommand{\satisfySym}{\Vdash}
\newcommand{\satisfy}[4]{#1, #2 \satisfySym^{#4} #3}
\newcommand{\notsatisfy}[4]{#1, #2 \not\Vdash^{#4} #3}

\newcommand{\msatisfy}[3]{#1, #2 \Vdash #3}

\usepackage{hyphenat}
\title{Intuitionistic K is a Bisimulation-Invariant Fragment\\ of Intuitionistic First-Order Logic}
\author{%
  Jim de Groot\footnote{The first author was supported by Swiss National Science Foundation (SNSF) grant No.~200021\_215157.}
  \institute{University of Bern\\ Bern, Switzerland}
  \email{jim.degroot@unibe.ch}
\and
  Jo{\~a}o Marcos\footnote{The second author acknowledges the support of CNPq and UNAM / PREI DGAPA.}
  \institute{Universidade Federal de Santa Catarina\\ Florian\'opolis, Brazil}
  \email{botocudo@gmail.com}
\and
  Rodrigo Stefanes\footnote{The third author is a CNPq Scholarship Recipient, and was partially funded also by LabSEC/UFSC.}
  \institute{Universidade Federal de Santa Catarina\\ Florian\'opolis, Brazil}
  \email{rodrigoamstefanes@gmail.com}
}

\newcommand{\titlerunning}{Intuitionistic K is a Bisimulation-Invariant Fragment of Intuitionistic First-Order Logic}
\newcommand{\authorrunning}{J.~de Groot, J.~Marcos \& R.~Stefanes}

\hypersetup{
  bookmarksnumbered,
  pdftitle    = {\titlerunning},
  pdfauthor   = {\authorrunning},
  pdfsubject  = {EPTCS},               
  pdfkeywords = {keyword1, keyword2} 
}

\begin{document}

\maketitle

\begin{abstract}
  We define the notion of IK-bi\-si\-mu\-la\-tion between the relational semantics
  for the intuitionistic modal logic IK, and prove that IK arises
  as the IK-bi\-si\-mu\-la\-tion-invariant fragment of intuitionistic first-order logic.
  En route, we provide an intrinsic characterisation result of this logic by way of a
  Hennessy--Milner-style theorem and develop
  some intuitionistic first-order model theory, including 
  intuitionistic analogues of {\L}o\'{s}'s Theorem,
  elementary embeddings and countable saturation.
\end{abstract}

\section{Introduction}

Bisimulations are an important tool in the study of modal logic and computer science. In computer science, they can be used as an equivalence relation between process graphs \cite{Mil80,Park81}. In modal logic they provide a structural notion of equivalence: worlds linked through a bisimulation satisfy the same formulas. The converse result, also known as Hennessy--Milner property \cite{HenMil85}, implies that the language is powerful enough to track down structural differences between the linked worlds.
Moreover, Van Benthem's theorem, originally proved in \cite{VBen76}, provides a relative characterisation theorem that states that normal modal logic is precisely the bisimulation-invariant fragment of classical first-order logic.

Similar results have been attained for several other logics, each having an appropriate notion of bisimulation encapsulating the underlying structure of the logic. These include modal logics without negation \cite{KurRij97}, logics with negative and restorative modalities \cite{GroMarSte25},
monotone modal logic \cite{Han03},
neighbourhood logics \cite{Han09},
(bi-)intuitionistic logic \cite{Bad16, GrootPatt19, Olk13, Pat97},
modal $\mu$-calculi (within monadic second order logics) \cite{EnqSeiVen19,JanWalu95} and fragments of XPath \cite{AbrioDescFig17, FigAre15, TenBalLit10}. 

In this paper we prove a characterisation theorem for the intuitionistic modal logic $\IK$. This logic was introduced by Fischer Servi~\cite{Servi84}, and was also studied by Plotkin and Stirling~\cite{PltStir86}, Ewald~\cite{Ewa86} and Simpson~\cite{Sim94}. It is one of the myriad intuitionistic counterparts of the classical normal modal logic~$\mathsf{K}$ (see~\cite{Sim94} for an overview), and can be viewed as the collection of modal formulas whose standard translations are provable in intuitionistic first-order logic.
While existing analogues of the Van Benthem characterisation theorem for non-classical logics, such as bi-intuitionistic logic and modal extensions of positive logic, use a \emph{classical} first-order logic~\cite{KurRij97,Bad16,GroMarSte25}, our aim is to establish $\IK$ as a bisimulation-invariant fragment of \emph{intuitionistic} first-order logic.

We follow a standard path to obtain the characterisation theorem. This requires an extension of the intuitionistic ultraproduct construction and of \L{}o{\'s}'s Theorem from~\cite{Gab72,Mar79} to allow for constants. Moreover, we introduce intuitionistic analogues of elementary embeddings and $\omega$-saturation, and accompanying theorems.
On the modal side, we use the birelational models of $\IK$ to provide an intuitive notion of IK-bi\-si\-mu\-la\-tions and prove a Hennessy--Milner-style theorem. The embedding of intuitionistic first-order structures into birelational models then allows us to transfer this to the intuitionistic first-order setting.

\paragraph{Structure of the paper}
  Sections \ref{sec:ifol} and \ref{sec:ik} review basic notions and semantics for intuitionistic first-order logic and for IK.
Section~\ref{sec:biss} defines IK-bi\-si\-mu\-la\-tions and modal saturation,
  and establishes a Hennessy--Milner-style theorem.
  In Section~\ref{sec:products} we define (ultra)filter products of intuitionistic first-order structures and give an intuitionistic analogue of {\L}o{\'s}'s Theorem,
  and in Section~\ref{sec:saturation} we provide an intuitionistic
  analogue of $\omega$-saturation.
Finally, Section \ref{sec:vbtheorem} proves that intuitionistic modal logic IK is the IK-bi\-si\-mu\-la\-tion-invariant fragment of intuitionistic first-order logic.

\section{(Intuitionistic) first-order logic}
\label{sec:ifol}

  We recall some first-order logic-related material. 
  We fix throughout the text, unless otherwise stated, an arbitrary first-order signature $(\Con,\Pred)$, without function symbols, where $\Con$ and $\Pred$ are disjoint sets containing, respectively, constant symbols and predicate symbols, the latter with respective arities.
  We also fix a denumerable set $\Var$, disjoint from $\Con\cup\Pred$, of variables. 

\begin{definition}
 The \emph{language $\LL$} (for the signature $(\Con,\Pred)$) is defined by the grammar 
  \begin{equation*}
    \phi ::= P(t_1, \ldots, t_n) 
      \mid \bot 
      \mid (\phi \wedge \phi)
      \mid (\phi \vee \phi)
      \mid (\phi \to \phi)
      \mid \forall x\, \phi
      \mid \exists x\, \phi
  \end{equation*}
  where $P$, with arity $n$, ranges over $\Pred$, and where
  $t_1, \ldots, t_n$ are \emph{terms}, i.e.~elements of $\Var \cup \Con$, and $x \in \Var$.
\end{definition}

\begin{definition}
  A \emph{$\CL$-structure} (for the signature $(\Con,\Pred)$) is a pair $\mo{C} = (D, \II)$
  consisting of a nonempty set $D$ and an interpretation $\II$ that assigns to each $c \in \Con$
  an individual $\II(c) \in D$, and to each $n$-ary
  $P \in \Pred$ a relation $\II(P) \subseteq D^n$.
\end{definition}

\begin{definition}\label{def:IL-structure}
  An \emph{$\IL$-structure} (for the signature $(\Con,\Pred)$) is a tuple $\mo{M} = (W, \leq, \{ \mo{C}_w \}_{w \in W})$
  consisting of a poset $(W, \leq)$ of \emph{worlds} and for each $w \in W$ a
  $\CL$-structure $\mo{C}_w = (D_w, \II_w)$,
  for some $\Con_w \subseteq \Con$, such that
  \begin{enumerate}
    \item if $w \leq w'$, then $D_w \subseteq D_{w'}$ and
          $\II_w(P) \subseteq \II_{w'}(P)$ for any predicate symbol $P$;
    \item for each constant symbol $c \in \Con$ there is an individual
          $c^{\mo{M}} \in \bigcup_{w \in W} D_w$ such that, for each $w\in W$:
          \begin{itemize}\itemsep0pt
            \item if $c^{\mo{M}} \in D_w$, then $c \in \Con_w$ and $\II_w(c) = c^{\mo{M}}$;
            \item if $c^{\mo{M}} \notin D_w$, then $c \notin \Con_w$ and $\II_w(c)$ is not defined.
          \end{itemize}
  \end{enumerate}
  We write $D_{\mo{M}} \coloneq \bigcup \{ D_w \mid w \in W \}$ for the \emph{domain} of $\mo{M}$.

  An \emph{assignment} for an $\IL$-structure $\mo{M}$ is a mapping
  $\rho : \Var\to D_{\mo{M}}$. Given $d \in D_{\mo{M}}$ and $x\in\Var$, we write $\rho[x \coloneq d]$ for the assignment that gives the value $d$ to $x$ and agrees with $\rho$ on all other variables.
  An assignment $\rho: \Var \to D_{\mo{M}}$ can naturally be extended to a mapping $\rho^\mo{M}: \Var \cup \Con \to D_{\mo{M}}$ such that $\rho^\mo{M}(c)=c^\mo{M}$ for every $c \in \Con$.

 We interpret formulas from the set $\LL$ in $\IL$-structures under an assignment~$\rho$ via:
  \begin{alignat*}{3}
    &\satisfy{\mo{M}}{w}{P(t_1, \ldots, t_n)}{\rho}
      &&\iff (\rho^{\mo{M}}(t_1), \ldots, \rho^{\mo{M}}(t_n)) \in \II_w(P) \\
    &\satisfy{\mo{M}}{w}{\bot}{\rho}
      &&\phantom{\iff}\text{never} \\
    &\satisfy{\mo{M}}{w}{\phi \land \psi}{\rho}
      &&\iff \satisfy{\mo{M}}{w}{\phi}{\rho}
        \text{ and } \satisfy{\mo{M}}{w}{\psi}{\rho} \\
    &\satisfy{\mo{M}}{w}{\phi \vee \psi}{\rho}
      &&\iff \satisfy{\mo{M}}{w}{\phi}{\rho}
        \text{ or } \satisfy{\mo{M}}{w}{\psi}{\rho} \\
    &\satisfy{\mo{M}}{w}{\phi \to \psi}{\rho}
      &&\iff \text{for all } w' \in W,
            \text{ if } w \leq w'
            \text{ and } \satisfy{\mo{M}}{w'}{\phi}{\rho},
            \text{ then } \satisfy{\mo{M}}{w'}{\psi}{\rho} \\
    &\satisfy{\mo{M}}{w}{\forall x\, \phi}{\rho}
      &&\iff \text{for all } w' \in W,
            \text{ if } w \leq w'
            \text{ and } d \in D_{w'},
            \text{ then } \satisfy{\mo{M}}{w'}{\phi}{\rho[x \coloneq d]} \\
    &\satisfy{\mo{M}}{w}{\exists x\, \phi}{\rho}
      &&\iff \text{there exists a } d \in D_w
            \text{ such that } \satisfy{\mo{M}}{w}{\phi}{\rho[x \coloneq d]}
  \end{alignat*}
    We say that a world $w$ \emph{satisfies} a formula $\varphi \in \LL$ under the assignment $\rho$ if $\satisfy{\mo{M}}{w}{\varphi}{\rho}$. In case $\notsatisfy{\mo{M}}{w}{\varphi}{\rho}$, we say that $w$ \emph{refutes} the formula $\varphi$ under the assignment $\rho$.

  If $\phi$ has no free variables then its interpretation does not depend on
  $\rho$, and we sometimes omit reference to it.
  If $\phi$ has one free variable $x$, then satisfaction of $\phi$ depends only
  on the action of $\rho$ on~$x$, and we write $\satisfy{\mo{M}}{w}{\phi}{[x \coloneq d]}$
  to mean that $\satisfy{\mo{M}}{w}{\phi}{\rho}$ for some arbitrary assignment $\rho$ that maps $x$ to~$d$.
\end{definition}

  Note that if $\rho^{\mo{M}}(t_j) \notin D_w$, then $P(t_1, \ldots, t_n)$ is not satisfied at $w$.
  This agrees, as a matter of fact, with the treatment of atomic formulas provided in \cite[Chapter 8]{FitMend:2:2023}.
  As usual, we have persistence:

\begin{lemma}\label{mon.ifol}
  If $w \leq w'$ and $\satisfy{\mo{M}}{w}{\phi}{\rho}$, then $\satisfy{\mo{M}}{w'}{\phi}{\rho}$.
\end{lemma}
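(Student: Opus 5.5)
The proof goes by induction on the structure of $\phi$, proving the statement simultaneously for all assignments $\rho$. The induction hypothesis must quantify over all assignments, because the quantifier clauses change $\rho$ to $\rho[x \coloneq d]$. Throughout, fix $w \leq w'$ with $\satisfy{\mo{M}}{w}{\phi}{\rho}$.

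\emph{Atomic case.} Suppose $\phi = P(t_1, \ldots, t_n)$. By definition, $(\rho^{\mo{M}}(t_1), \ldots, \rho^{\mo{M}}(t_n)) \in \II_w(P)$. Condition~(1) of Definition~\ref{def:IL-structure} gives $\II_w(P) \subseteq \II_{w'}(P)$, so the same tuple lies in $\II_{w'}(P)$. This is the only point where the monotonicity conditions on the structure are used. The interpretation $\rho^{\mo{M}}(c) = c^{\mo{M}}$ of constants is global and does not depend on the world, so it causes no difficulty. The only subtlety is to check that the atomic clause really refers to $\II_w(P)$ as a subset of $D_w^n$ and does not depend on whether $c \in \Con_w$. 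Since $c^{\mo{M}}$ is world-independent, this holds.

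\emph{Propositional cases.} The case $\bot$ is vacuous. The cases $\wedge$ and $\vee$ follow immediately from the induction hypothesis applied to the components.

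\emph{Implication and universal quantifier.} Suppose $\phi = \psi \to \chi$ or $\phi = \forall x\,\psi$. Any $w''$ with $w' \leq w''$ also satisfies $w \leq w''$ by transitivity of $\leq$. The defining clause at $w$ therefore already covers every $w''$ that the clause at $w'$ asks about. These cases need no induction hypothesis, only transitivity of the partial order.

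\emph{Existential quantifier.} Suppose $\phi = \exists x\,\psi$. Then there is a witness $d \in D_w$ with $\satisfy{\mo{M}}{w}{\psi}{\rho[x \coloneq d]}$. Condition~(1) gives $D_w \subseteq D_{w'}$, so $d \in D_{w'}$. The induction hypothesis for $\psi$ under the assignment $\rho[x \coloneq d]$ yields $\satisfy{\mo{M}}{w'}{\psi}{\rho[x \coloneq d]}$, so $d$ witnesses $\exists x\,\psi$ at $w'$.

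None of these steps is a real obstacle. The points needing care are the generalization over assignments in the induction and the domain inclusion used in the existential case.
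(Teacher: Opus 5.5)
Your proof is correct. The paper states this lemma without proof ("as usual, we have persistence"), and your structural induction over all assignments is the standard argument it relies on: predicate and domain monotonicity from condition~(1) handle the atomic and $\exists$ cases, and transitivity of $\leq$ handles $\to$ and $\forall$. One cosmetic point: the $\exists$ case also uses condition~(1), via $D_w \subseteq D_{w'}$, so your remark that the atomic case is its only use is accurate only if it refers to $\II_w(P) \subseteq \II_{w'}(P)$ alone.
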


\begin{definition}
  A \emph{consecution} is a pair $\consec{\Gamma}{\Delta}$, where $\Gamma,\Delta\subseteq \LL$.
  A (\emph{finite}) \emph{subconsecution} of $\consec{\Gamma}{\Delta}$
  is a consecution $\consec{\Gamma'}{\Delta'}$ such that $\Gamma'$ and $\Delta'$
  are (finite) subsets of $\Gamma$ and $\Delta$, respectively. We write $\consec{\Gamma}{\Delta}(x_1,\ldots,x_n)$ to express that the set of free variables occurring in $\Gamma\cup \Delta$ is a subset of $x_1,\ldots,x_n$.
\end{definition}

\begin{definition}\label{def:separation-refutation}
  Let $\mo{M} = (W, \leq, \{ \mo{C}_w \}_{w \in W})$ be an $\IL$-structure.
 A world $w\in W$ and an assignment $\rho$ for~$\mo{M}$ \emph{separate} the consecution $\consec{\Gamma}{\Delta}$,
  denoted by $\satisfy{\mo{M}}{w}{\consec{\Gamma}{\Delta}}{\rho}$,
  if, under the assignment $\rho$, $w$ satisfies all formulas of $\Gamma$ and refutes all formulas of $\Delta$.
  We say that the consecution $\consec{\Gamma}{\Delta}$ is \emph{separable}
  in a set $U \subseteq W$ if there exist $w \in U$ and an assignment
  $\rho$ such that $\satisfy{\mo{M}}{w}{\consec{\Gamma}{\Delta}}{\rho}$.
  We call $\consec{\Gamma}{\Delta}$
  \emph{finitely separable} in a set $U \subseteq W$ if every finite
  subconsecution of $\consec{\Gamma}{\Delta}$ is separable in~$U$.
  A set $\Omega \subseteq \LL$ is \emph{satisfied} at $w$ under the assignment $\rho$ (notation: $\satisfy{\mo{M}}{w}{\Omega}{\rho}$), if $\satisfy{\mo{M}}{w}{\consec{\Omega}{\emptyset}}{\rho}$, and~$\Omega$ is called (finitely) satisfiable in a set $U$ if
  $\consec{\Omega}{\emptyset}$ is (finitely) separable in $U$.
  Finally, we say that $\Omega$ is \emph{refuted} at $w$ under $\rho$ if $\satisfy{\mo{M}}{w}{\consec{\emptyset}{\Omega}}{\rho}$.
\end{definition}

\section{Intuitionistic modal logic}\label{sec:ik}

  We recall the intuitionistic modal logic $\IK$ and its connection to
  intuitionistic first-order logic via the standard translation.
  Fix a denumerable set $\Prop$ of propositional letters.
  Throughout this section, we take $\Con = \emptyset$ and
  $\Pred = \Prop \cup \{ R \}$, where each symbol in $\Prop$ is taken as unary
  and $R$ is a binary predicate symbol. 
  The language $\LL$ and the $\IL$-structures in this section have $(\Con,\Pred)$ as their signature.
  
\begin{definition}
  The \emph{modal language} $\ML$ is generated by the grammar
  \begin{equation*}
    \phi ::= P
      \mid \bot
      \mid (\phi \wedge \phi)
      \mid (\phi \vee \phi)
      \mid (\phi \to \phi)
      \mid \Box\phi
      \mid \Diamond\phi
      \qquad\text{(where $P \in \Prop$)}
  \end{equation*}
\end{definition}

\begin{definition}
  The \emph{standard translation} $\Var \times \ML \to \LL$ is recursively defined by
  \begin{alignat*}{2}
    &\st(x, P) = P(x) \qquad
    \st(x, \bot) = \bot \qquad 
    &\st(x, \phi \star \psi) &= \st(x, \phi) \star \st(x, \psi) \qquad (\star \in \{ \wedge, \vee, \to \}) \\
    &\st(x, \Box\phi) = \forall y(R(x,y) \to \st(y, \phi)) \qquad
    &\st(x, \Diamond\phi) &= \exists y(R(x,y) \wedge \st(y, \phi))
  \end{alignat*}
  where $y$ is a fresh variable.
  For $\Phi \subseteq \ML$ we define $\st(x, \Phi) \coloneq \{ \st(x, \phi) \mid \phi \in \Phi \}$.
\end{definition}

  The logic $\IK$ can then be defined as a set of theorems by
  \begin{equation*}
    \IK = \{ \phi \in \ML \mid \satisfy{\mo{M}}{w}{\st(x,\phi)}{\rho} \text{ for every $\IL$-structure $\mo{M}$, world $w$ and assignment $\rho$} \}.
  \end{equation*}
  Then we can use $\IL$-structures $\mo{M} = (W, \leq, \{ \mo{C}_w \}_{w \in W})$
  to interpret formulas
  $\phi \in \ML$ at a world $w \in W$ and an individual $d \in D_w$ via
  \begin{equation*}
    \satisfy{\mo{M}}{w, d}{\phi}{} \iff \satisfy{\mo{M}}{w}{\st(x,\phi)}{[x \coloneq d]}.
  \end{equation*}
  An alternative adequate semantics for $\IK$ is given by
  birelational models~\cite[Section~8.1]{Sim94}:

\begin{definition}
  A \emph{birelational model} is a tuple $\B = (W, \leq, R, V)$ consisting of
  a poset $(W, \leq)$, a valuation~$V$ that maps propositional letters to upsets of $(W, \leq)$,
  and a binary relation $R$ on $W$
  such that:
  \begin{itemize}\itemsep0pt
    \item if $v \geq w R u$, 
          then there exists a $t \in W$ such that $vRt \geq u$, for all $u, v, w \in W$; and
    \item if $w R u \leq v$, 
          then there exists a $t \in W$ such that $w \leq t R v$, for all $u, v, w \in W$.
  \end{itemize}
  The interpretation of formulas in $\ML$ is recursively defined by
  \begin{alignat*}{3}
    &\B, w \Vdash P
    &&\iff w \in V(P) \\
    &\B, w \Vdash \bot
    &
    &\phantom{\iff}\text{never}
    \\
    &\B, w \Vdash \phi \land \psi
    &&\iff \B, w \Vdash \phi \text{ and } \B, w \Vdash \psi \\
    &\B, w \Vdash \phi \lor \psi
    &&\iff \B, w \Vdash \phi \text{ or } \B, w \Vdash \psi \\
    &\B,w \Vdash \phi \to \psi
    &&\iff\text{for every } v \in W,
          \text{ if } w \leq v
          \text{ and } \B, v \Vdash \phi,
          \text{ then } \B, v \Vdash \psi \\
    &\B, w \Vdash \Box\phi
    &&\iff\text{for every } v, u \in W,
          \text{if } w \leq v
          \text{ and } vRu,
          \text{ then } \B, u \Vdash \phi \\
    & \B, w \Vdash \Diamond\phi
    &&\iff\text{there exists a } v \in W
          \text{ such that } wRv
          \text{ and } \B, v \Vdash \phi
  \end{alignat*}
  Worlds $w_1$ and $w_2$ are called \emph{modally equivalent} (notation: $w_1 \logeq w_2$)
  if they satisfy the same formulas from $\ML$.
  We define separability, satisfaction and refutation as in
  Definition~\ref{def:separation-refutation}.
\end{definition}

  Birelational semantics enjoy the expected persistence property~\cite[Lemma~3.3.1]{Sim94}:

\begin{lemma}\label{lem:persistence}
  Let $\B = (W, \leq, \ac, V)$ be a birelational model.
  If $w, v \in W$ and $w \leq v$, then $\B, w \Vdash \phi$ implies $\B, v \Vdash \phi$, for all $\phi \in \ML$.
\end{lemma}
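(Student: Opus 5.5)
We prove the statement by induction on the structure of $\phi \in \ML$, keeping $w \leq v$ arbitrary: for every formula $\phi$ we show that for \emph{all} $w \leq v$, $\B, w \Vdash \phi$ implies $\B, v \Vdash \phi$. Quantifying over all pairs matters because the $\Diamond$ case will need the hypothesis at worlds other than $w$ and $v$.

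The base cases are immediate. For a propositional letter $P$, $V(P)$ is an upset by definition of a birelational model, so $w \in V(P)$ and $w \leq v$ give $v \in V(P)$. The case $\bot$ is vacuous. For $\wedge$ and $\vee$ we simply apply the induction hypothesis to the components. For $\phi \to \psi$ and $\Box\phi$ we do not even need the induction hypothesis, only transitivity of $\leq$. If $\B, w \Vdash \phi \to \psi$ and $v \leq v'$ with $\B, v' \Vdash \phi$, then $w \leq v'$ by transitivity, so $\B, v' \Vdash \psi$. Similarly, if $\B, w \Vdash \Box\phi$ and $v \leq v'$, $v' R u$, then $w \leq v'$, hence $\B, u \Vdash \phi$.

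The only case that uses the frame conditions, and which I expect to be the one requiring care, is $\Diamond\phi$. Suppose $\B, w \Vdash \Diamond\phi$, so there is $u$ with $wRu$ and $\B, u \Vdash \phi$. Since $v \geq w R u$, the first frame condition yields some $t$ with $vRt$ and $t \geq u$. The induction hypothesis for $\phi$, applied to the pair $u \leq t$, gives $\B, t \Vdash \phi$. Hence $\B, v \Vdash \Diamond\phi$.

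Only the first of the two frame conditions is used; the second is not needed for persistence.
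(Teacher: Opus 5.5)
Your proof is correct and is essentially the standard argument: the paper does not prove this lemma itself but cites Simpson's Lemma~3.3.1, which is established by the same structural induction. As you note, transitivity of $\leq$ handles $\to$ and $\Box$, and the first frame condition handles $\Diamond$.
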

  
  Every $\IL$-structure gives rise to a birelational model in a truth-preserving way as follows (following Section~8.1.1 of~\cite{Sim94}).

\begin{definition}
  Let $\mo{M} = (W, \leq, \{ \mo{C}_w \}_{w \in W})$ be an $\IL$-structure.
  The \emph{induced birelational model} $\B_{\mo{M}}$ is given by 
  $(W^{\bullet}, \leq^{\bullet}, R^{\bullet}, V^{\bullet})$,
  where:
  \begin{align*}
    W^{\bullet}
      &= \{ (w, d) \mid w \in W \text{ and } d \in D_w \} 
      &(w, d) \leq^{\bullet} (w', d')
      &\iff w \leq w' \text{ and } d = d' \\
    V^{\bullet}(P)
      &= \{ (w, d) \mid d \in \II_w(P)\}
      &(w, d) R^{\bullet} (w', d')
      &\iff w = w' \text{ and } (d, d') \in \II_w(R)
  \end{align*}
\end{definition}

\begin{lemma}\label{lem:induced-birel}
  For any $\IL$-structure $\mo{M} = (W, \leq, \{ \mo{C}_w \}_{w \in W})$,
  $w \in W$, $d \in D_w$ and $\phi \in \ML$ we have
  \begin{equation*}\msatisfy{\mo{M}}{w, d}\phi \iff \msatisfy{\B_{\mo{M}}}{(w,d)}{\phi}.\end{equation*}
\end{lemma}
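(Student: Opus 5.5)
The plan is to prove the equivalence by induction on the structure of $\phi \in \ML$, simultaneously for all worlds $w \in W$ and all $d \in D_w$. Unfolding the definition, the left-hand side means $\satisfy{\mo{M}}{w}{\st(x,\phi)}{[x \coloneq d]}$. The right-hand side is the birelational clause evaluated in $\B_{\mo{M}}$. A preliminary remark is needed first: $\B_{\mo{M}}$ is indeed a birelational model. The order $\leq^\bullet$ is a partial order because $\leq$ is one. Each $V^\bullet(P)$ is an upset by condition~(1) of Definition~\ref{def:IL-structure}. For the first frame condition, take $(w',d) \geq^\bullet (w,d) R^\bullet (w,e)$; since $D_w \subseteq D_{w'}$ and $\II_w(R) \subseteq \II_{w'}(R)$, the witness $t = (w',e)$ works. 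For the second, take $(w,d) R^\bullet (w,e) \leq^\bullet (w',e)$; the witness $t = (w',d)$ works, again by monotonicity of $\II(R)$.

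Now the induction. For atoms, $\st(x,P) = P(x)$ holds at $w$ under $[x\coloneq d]$ iff $d \in \II_w(P)$ iff $(w,d) \in V^\bullet(P)$. The cases for $\bot$, $\wedge$ and $\vee$ are immediate from the hypothesis. For $\phi \to \psi$, the first-order clause quantifies over $w' \geq w$ while keeping the assignment, so $d$ stays fixed. The birelational clause quantifies over $(w',d') \geq^\bullet (w,d)$, which forces $d' = d$ and $w' \geq w$. Since $d \in D_{w'}$ by monotonicity of domains, the induction hypothesis applies at $(w',d)$, and the two clauses match exactly.

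For $\Box\phi$, the translation $\forall y(R(x,y) \to \st(y,\phi))$ holds at $w$ iff the following holds: for all $w' \geq w$ and $e \in D_{w'}$, and all $w'' \geq w'$ with $(d,e) \in \II_{w''}(R)$, we have $w'' \Vdash \st(y,\phi)[y\coloneq e]$. Persistence of atoms collapses this to: for all $w' \geq w$ and $e \in D_{w'}$ with $(d,e) \in \II_{w'}(R)$, $\satisfy{\mo{M}}{w'}{\st(y,\phi)}{[y \coloneq e]}$. In one direction take $w'' = w'$. In the other, use monotonicity of $\II(R)$ together with Lemma~\ref{mon.ifol}, or simply note that the condition at $w'$ already covers every $w''$. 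This is precisely the birelational clause for $\Box$ at $(w,d)$, namely for all $(w',d) \geq^\bullet (w,d)$ and $(w',d) R^\bullet (w',e)$, combined with the induction hypothesis at $(w',e)$. The only bookkeeping point is that the translation's variable $y$ is fresh, so satisfaction depends only on the value $e$ assigned to $y$; this needs a small renaming remark, namely that $\st(y,\phi)$ under $[y\coloneq e]$ agrees with $\st(x,\phi)$ under $[x\coloneq e]$.

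For $\Diamond\phi$, the translation holds iff some $e \in D_w$ satisfies $(d,e) \in \II_w(R)$ and $\st(y,\phi)$ at $w$ under $[y\coloneq e]$. This is exactly the existence of $(w,e)$ with $(w,d) R^\bullet (w,e)$ satisfying $\phi$, by the induction hypothesis.

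I expect the $\Box$ case to be the main obstacle, since it is the only case that needs the collapse of the nested implication inside $\forall$. The variable-renaming remark also needs care, though it is routine.
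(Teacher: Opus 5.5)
Your proof is correct. The paper states this lemma without proof, attributing the construction to Simpson (Section~8.1.1), and your structural induction on $\phi$ is the intended argument; your check that $\B_{\mo{M}}$ is a birelational model, which the paper only asserts, is a useful addition.

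One small precision in the $\Box$ case. The collapsed-to-nested direction needs neither persistence nor monotonicity of $\II(R)$, which points the wrong way there because $(d,e)\in\II_{w''}(R)$ is given only at the higher world. Instead, apply the collapsed condition at $w''$ itself, using $w''\geq w$ and $e\in D_{w'}\subseteq D_{w''}$.
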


\section{IK-bi\-si\-mu\-la\-tions}
\label{sec:biss}

  We study IK-bi\-si\-mu\-la\-tions between birelational frames.
  These are slightly weaker than the usual definition of a (Kripke) bisimulation from
  modal logic~\cite[Section~2.2]{BlaRijVen01} because, as we shall see in
  Example~\ref{exm:no-hm}, the usual definition is too strong to prove that
  bisimilarity and modal equivalence coincide even on finite models.
  Throughout this section we use the same first-order signature as in
  Section~\ref{sec:ik}.
  Given binary relations $S$ and $R$, we write
  $S \mathrel{;} R \coloneq \{ (x, y) \mid xSz \text{ and } zRy \text{ for some } z \}$ for their sequential composition.

\begin{definition}
  Let $\B = (W, \leq, \ac, V)$ and $\B' = (W', \leq', \ac', V')$ be two
  birelational models. An \emph{IK-bi\-si\-mu\-la\-tion} between $\B$ and $\B'$ is a
  relation $\Z \subseteq W \times W'$ such that for all $\Zin{w}{w'}$:
  \begin{enumerate}
    \myitem{$\Z_p$} \label{it:sim-prop}
             $w \in V(p)$ if and only if $w' \in V'(p)$,
            for all $p\in\Prop$;
    \myitem{$\Z^1_\leq$} \label{it:sim-imp-1}
            if $w \leq v$, then there exists a $v'\in W'$ such that
            $\Zin{v}{v'}$ and $w' \leq' v'$;
    \myitem{$\Z^2_\leq$} \label{it:sim-imp-2}
            if $w' \leq' v'$, then there exists a $v\in W$ such that
            $\Zin{v}{v'}$ and $w \leq v$;
      \myitem{$\Z^1_\Box$} \label{it:sim-box-1}
              if $w \ac v$, then there exists a $v'\in W'$ such
              that $\Zin{v}{v'}$ and $w' ({\leq'} \mathrel{;} {\ac'}) v'$;
      \myitem{$\Z^2_\Box$} \label{it:sim-box-2}
              if $w' \ac' v'$, then there exists a $v\in W$ such
              that $\Zin{v}{v'}$ and $w ({\leq} \mathrel{;} {\ac}) v$;
      \myitem{$\Z^1_\Diamond$} \label{it:sim-diamond-1}
              if $wRv$, then there exist $s \in W$ and $s' \in W'$ such that
              $v \leq s$ and $w' \ac' s'$ and $\Zin{s}{s'}$;
      \myitem{$\Z^2_\Diamond$} \label{it:sim-diamond-2}
              if $w'R'v'$, then there exist $s' \in W'$ and $s \in W$ such that
              $v' \leq' s'$ and $w \ac s$ and $\Zin{s}{s'}$.
  \end{enumerate}
  We write $w \bisim w'$, and call $w$ and $w'$ \emph{bisimilar},
  if there exists an IK-bisimulation that links $w$ and $w'$.
\end{definition}

  The last four conditions can be depicted as follows, where solid and dashed arrows indicate
  universal and existential quantification, respectively:
  \begin{equation*}
    \begin{tikzpicture}[arrows=-latex,yscale=.73]
        \node (w)  at (0,0) {$w$};
        \node (v)  at (0,3) {$v$};
        \node (wp) at (2,0) {$w'$};
        \node (sp) at (2,1.5) {$s'$};
        \node (vp) at (2,3) {$v'$};
        \node at (1,-.6) {\eqref{it:sim-box-1}};
        \draw (w) to node[left]{\fns{$\ac$}} (v);
        \draw[Circle-Circle] (w) to (wp);
        \draw[dashed] (wp) to node[right]{\fns{$\leq'$}} (sp);
        \draw[dashed] (sp) to node[right]{\fns{$\ac'$}} (vp);
        \draw[dashed,Circle-Circle] (v) to (vp);
        \node (w)  at (4,0) {$w$};
        \node (s)  at (4,1.5) {$s$};
        \node (v)  at (4,3) {$v$};
        \node (wp) at (6,0) {$w'$};
        \node (vp) at (6,3) {$v'$};
        \node at (5,-.6) {\eqref{it:sim-box-2}};
        \draw[Circle-Circle] (w) to (wp);
        \draw (wp) to node[right]{\fns{$\ac'$}} (vp);
        \draw[dashed] (w) to node[left]{\fns{$\leq$}} (s);
        \draw[dashed] (s) to node[left]{\fns{$\ac$}} (v);
        \draw[dashed,Circle-Circle] (v) to (vp);
        \node (w)  at (8,0) {$w$};
        \node (s)  at (8,1.5) {$v$};
        \node (v)  at (8,3) {$s$};
        \node (wp) at (10,0) {$w'$};
        \node (vp) at (10,3) {$s'$};
        \node at (9,-.6) {\eqref{it:sim-diamond-1}};
        \draw[Circle-Circle] (w) to (wp);
        \draw (w) to node[left]{\fns{$\ac$}} (s);
        \draw[dashed] (s) to node[left]{\fns{$\leq$}} (v);
        \draw[dashed, Circle-Circle] (v) to (vp);
        \draw[dashed] (wp) to node[right]{\fns{$\ac'$}} (vp);
        \node (w)  at (12,0) {$w$};
        \node (v)  at (12,3) {$s$};
        \node (wp) at (14,0) {$w'$};
        \node (sp)  at (14,1.5) {$v'$};
        \node (vp) at (14,3) {$s'$};
        \node at (13,-.6) {\eqref{it:sim-diamond-2}};
        \draw[Circle-Circle] (w) to (wp);
        \draw (wp) to node[right]{\fns{$\ac'$}} (sp);
        \draw[dashed] (sp) to node[right]{\fns{$\leq'$}} (vp);
        \draw[dashed, Circle-Circle] (v) to (vp);
        \draw[dashed] (w) to node[left]{\fns{$\ac$}} (v);
    \end{tikzpicture}
  \end{equation*}

Note that bisimilarity implies model equivalence, that is:

\begin{proposition}\label{prop:adeq}
  Let $\B = (W, \leq, \ac, V)$ and $\B' = (W', \leq', \ac', V')$ be two
  birelational models. Then $w \bisim w'$ implies $w \logeq w'$,
  for all $w \in W$ and $w' \in W'$.
\end{proposition}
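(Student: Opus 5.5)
We argue by induction on the structure of $\phi \in \ML$, proving simultaneously for all pairs that whenever $\Zin{w}{w'}$ for an IK-bisimulation $\Z$, we have $\B, w \Vdash \phi$ iff $\B', w' \Vdash \phi$. Since the conditions of an IK-bisimulation are symmetric (each clause with superscript $1$ has a mirror with superscript $2$), it suffices in every inductive step to prove one direction, say that $\B, w \Vdash \phi$ implies $\B', w' \Vdash \phi$; the converse is obtained by swapping the roles of the two models. The base cases are immediate: $\bot$ is never satisfied, and propositional letters are handled by \eqref{it:sim-prop}. Conjunction and disjunction follow directly from the induction hypothesis.

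For implication, suppose $\B, w \Vdash \phi \to \psi$ and let $w' \leq' v'$ with $\B', v' \Vdash \phi$. By \eqref{it:sim-imp-2} there is $v \geq w$ with $\Zin{v}{v'}$. The induction hypothesis gives $\B, v \Vdash \phi$, hence $\B, v \Vdash \psi$, and again by the induction hypothesis $\B', v' \Vdash \psi$.

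For $\Box\phi$, suppose $\B, w \Vdash \Box\phi$ and let $w' \leq' u' \mathrel{R'} v'$. First use \eqref{it:sim-imp-2} to obtain $u \geq w$ with $\Zin{u}{u'}$; then apply \eqref{it:sim-box-2} to the pair $(u,u')$ and the step $u' R' v'$, which yields $v$ with $\Zin{v}{v'}$ and $u ({\leq}\mathrel{;}{R}) v$, that is, $u \leq t R v$ for some $t$. Since $w \leq u \leq t$ and $tRv$, the semantics of $\Box$ gives $\B, v \Vdash \phi$, so $\B', v' \Vdash \phi$ by the induction hypothesis. This is the step where the weakened clause (allowing a $\leq$-step before $R$) matters, and transitivity of $\leq$ is exactly what absorbs the extra step.

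For $\Diamond\phi$, suppose $wRv$ with $\B, v \Vdash \phi$. By \eqref{it:sim-diamond-1} there are $s \geq v$ and $s'$ with $w' R' s'$ and $\Zin{s}{s'}$. Persistence (Lemma~\ref{lem:persistence}) gives $\B, s \Vdash \phi$, the induction hypothesis gives $\B', s' \Vdash \phi$, and hence $\B', w' \Vdash \Diamond\phi$. The only subtle points are invoking persistence in the $\Diamond$ case and composing the $\leq$-clause with the $\Box$-clause in the $\Box$ case; beyond these, no real obstacle is expected.
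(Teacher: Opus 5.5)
Your proof is correct and follows essentially the same route as the paper: structural induction on formulas, using \eqref{it:sim-imp-2} followed by \eqref{it:sim-box-2} in the $\Box$ case and \eqref{it:sim-diamond-1} plus persistence in the $\Diamond$ case. The only differences are cosmetic: you absorb the extra $\leq$-step by transitivity where the paper appeals to persistence, and you spell out the implication case and the symmetry argument, which the paper leaves implicit.
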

\begin{proof}
  We prove something stronger, namely that for every formula~$\varphi$ and for any pair of worlds $w, w'$ such that $w\bisim w'$ we have $\B,w\Vdash \varphi$ iff $\B',w'\Vdash \varphi$.  
  This may be checked by structural induction on~$\varphi$.
  We showcase the inductive steps for $\Diamond$ and $\Box$.  
  Assume, then, for a certain formula $\psi$, the following induction hypothesis,  P[$\psi$]: $\B,w\Vdash \psi$ iff $\B',w'\Vdash \psi$, for any pair of worlds $w, w'$ such that $w\bisim w'$.
  Accordingly, in what follows, take arbitrary worlds $w, w'$ such that $w \bisim w'$.  
  
  \medskip\noindent
  \textit{$\triangleright\;$ Case for $\phi = \Diamond\psi$.} Suppose $\B, w \Vdash \phi$.
  In view of the assumption that $w \bisim w'$, let $Z \subseteq W \times W'$ be an $\IK$ bisimulation
  between $\B$ and $\B'$ such that $(w, w') \in Z$.
  From $\B, w \Vdash \Diamond\psi$ we may obtain a world $v \in W$
  such that $w \ac v$ and $\B, v \Vdash \psi$. Using~\eqref{it:sim-diamond-1}
  we obtain $s \in W$ and $s' \in W'$ such that $v \leq s$, $w'\ac's'$ and $(s,s') \in Z$. The induction hypothesis and Lemma~\ref{lem:persistence}
  then imply $\B', s' \Vdash \psi$, hence $\B', w' \Vdash \Diamond\psi$.
  The converse (namely, if $\B', w' \Vdash \Diamond\psi$ then $\B, w \Vdash \Diamond\psi$)
  can be proven similarly.

  \medskip\noindent
  \textit{$\triangleright\;$ Case for $\phi = \Box\psi$.} Suppose $\B, w \Vdash \phi$.
  Let $Z \subseteq W \times W'$ be an $\IK$ bisimulation between $\B$ and $\B'$ such that $(w, w') \in Z$.
  Let $t', v' \in W'$ be such that $w' \leq' t' \ac' v'$.
  Then we can use~\eqref{it:sim-imp-2} and~\eqref{it:sim-box-2} to find
  $t, s, v \in W$ such that $w \leq t \leq s \ac v$ and $(v, v') \in Z$.
  This implies $w \leq s$, so by Lemma~\ref{lem:persistence} we have $\B, s \Vdash \Box\psi$.
  Hence $\B, v \Vdash \psi$, and by the induction hypothesis
  $\B', v' \Vdash \psi$. This entails that $\B', w' \Vdash \Box\psi$.
  The converse is proven similarly.

  This concludes the inductive proof of P[$\varphi$], for every~$\varphi$.

  Take now worlds $w, w'$ such that $w \bisim w'$.  Given an arbitrary formula~$\varphi$, we may use P[$\varphi$] to conclude that $w \logeq w'$.
\end{proof}

\newcommand{\larr}[4]{\draw (#1) to node[#4]{\footnotesize{$#3$}} (#2)}
\begin{example}\label{exm:no-hm}
  Consider the two birelational models drawn below,
  where the circled worlds indicate the valuation of a proposition letter $P$, and
  the intuitionistic accessibility relation is the reflexive closure of the depicted $\leq$-arrows:
  \begin{equation*}
    \begin{tikzpicture}[xscale=1.6,yscale=1.4,arrows=-latex]
        \foreach \i in {(1,.7),(2,0),(2,1),(5,.7),(6,-1),(6,0),(6,1)}{
        \draw[fill=blue!10] \i ellipse(1.7mm and 1.8mm);
        }
        \node (w1) at (0,0) {$w_1$};
        \node (w2) at (0,1) {$w_2$};
        \node (v1) at (1,-.3) {$v_1$};
        \node (v2) at (1,.7) {$v_2$};
        \node (u1) at (2,0) {$u_1$};
        \node (u2) at (2,1) {$u_2$};
        \larr{w1}{v1}{R}{below};
        \larr{w1}{u1}{R}{above,pos=.8};
        \larr{w2}{v2}{R}{below};
        \larr{w2}{u2}{R}{above};
        \larr{w1}{w2}{\leq}{left};
        \draw[white,-,line width=3pt] (v1) to (v2);
        \larr{v1}{v2}{\leq}{left,pos=.6};
        \larr{u1}{u2}{\leq}{right};
        \node (w0) at (4,-1) {$w_0'$};
        \node (w1) at (4,0) {$w_1'$};
        \node (w2) at (4,1) {$w_2'$};
        \node (v1) at (5,-.3) {$v_1'$};
        \node (v2) at (5,.7) {$v_2'$};
        \node (u0) at (6,-1) {$u_0'$};
        \node (u1) at (6,0) {$u_1'$};
        \node (u2) at (6,1) {$u_2'$};
        \larr{w0}{u0}{R}{below};
        \larr{w1}{v1}{R}{below};
        \larr{w1}{u1}{R}{above,pos=.8};
        \larr{w2}{v2}{R}{below};
        \larr{w2}{u2}{R}{above};
        \larr{w1}{w2}{\leq}{left};
        \draw[white,-,line width=3pt] (v1) to (v2);
        \larr{v1}{v2}{\leq}{left,pos=.6};
        \larr{u1}{u2}{\leq}{right};
        \larr{w0}{w1}{\leq}{left};
        \larr{u0}{u1}{\leq}{right};
    \end{tikzpicture}
  \end{equation*}
  The following relation is an IK-bi\-si\-mu\-la\-tion:
  \begin{equation*}
    \Z = \big\{ (w_1, w_0'), (w_1, w_1'), (w_2, w_2'), (v_1, v_1') \big\}
        \cup \big\{ (x, y) \mid x \in \{ v_2, u_1, u_2 \} \text{ and } y \in \{ v_2', u_0', u_1', u_2' \} \big\}.
  \end{equation*}
  In particular, $w_1$ and $w'_0$ are modally equivalent. However, there is no Kripke bisimulation between $w_1$ and $w'_0$, since $w_1Rv_1$ cannot be mirrored at $w_0'$. 
\end{example}

\begin{definition}
  We define an IK-bi\-si\-mu\-la\-tion between two $\IL$-structures $\mo{M}$ and $\mo{M}'$
  as an IK-bi\-si\-mu\-la\-tion between the induced birelational models $\B_{\mo{M}}$ and $\B_{\mo{M}'}$.
\end{definition}

  Next, we give a notion of saturation that encompasses image-finite birelational
  models, and prove that on the class of saturated birelational models
  bisimilarity coincides with logical equivalence.

\begin{definition}\label{def:mod-sat}
  Let $\B = (W, \leq, \ac, V)$ be a birelational model.
  A subset $X \subseteq W$ is called
  \begin{enumerate}
    \item \emph{positively saturated} if every set $\Phi \subseteq \ML$ that is finitely satisfiable in $X$ is also satisfiable in $X$;
    \item \emph{saturated} if every consecution $\consec{\Phi}{\Psi}$ that is finitely separable in $X$ is also separable in $X$. 
  \end{enumerate}
  We call $\B$
  \emph{modally saturated} if for each $w \in W$ the set
  $R[w] \coloneq \{ v \in W \mid wRv \}$ is positively saturated,
  and the sets ${\uparrow}w \coloneq \{ v \in W \mid w \leq v \}$
  and $R_{\uparrow}[w] \coloneq \{ v \in W \mid w ({\leq} \mathrel{;} R) v \}$
  are both saturated.
\end{definition}

\begin{theorem}\label{thm:hm}
  Let $\B = (W, \leq, \ac, V)$ and $\B' = (W', \leq', \ac', V')$ be two modally
  saturated birelational models. Then bisimilarity and modal equivalence
  coincide between these models.
\end{theorem}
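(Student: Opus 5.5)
One direction is Proposition~\ref{prop:adeq}. For the converse, I will show that modal equivalence itself,
\begin{equation*}
  Z \coloneq \{ (w, w') \in W \times W' \mid w \logeq w' \},
\end{equation*}
is an IK-bisimulation between $\B$ and $\B'$. By symmetry it suffices to verify \eqref{it:sim-prop}, \eqref{it:sim-imp-1}, \eqref{it:sim-box-1} and \eqref{it:sim-diamond-1}. Condition \eqref{it:sim-prop} is immediate, since $w$ and $w'$ agree on every propositional letter. In each remaining case I will build a consecution from the modal theory of the given world, show that it is finitely separable in a suitable subset of $W'$, and then use saturation to separate it.

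\textbf{Condition \eqref{it:sim-imp-1}.} Let $w \leq v$, and put $\Phi = \{\phi \mid \B, v \Vdash \phi\}$ and $\Psi = \{\psi \mid \B, v \not\Vdash \psi\}$. Take finite $\Phi_0 \subseteq \Phi$ and $\Psi_0 \subseteq \Psi$. Then $w \not\Vdash \bigwedge\Phi_0 \to \bigvee\Psi_0$, so $w' \not\Vdash \bigwedge\Phi_0 \to \bigvee\Psi_0$. Hence some $v' \in {\uparrow}w'$ separates $\consec{\Phi_0}{\Psi_0}$. By saturation of ${\uparrow}w'$, some $v' \geq' w'$ separates $\consec{\Phi}{\Psi}$, and so $v \logeq v'$.

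\textbf{Condition \eqref{it:sim-box-1}.} Let $wRv$, and take $\Phi$, $\Psi$ for $v$ as before. For finite $\Phi_0$, $\Psi_0$ we have $w \not\Vdash \Box(\bigwedge\Phi_0 \to \bigvee\Psi_0)$, witnessed by $w \leq w$ and $wRv$. Therefore $w' \not\Vdash \Box(\bigwedge\Phi_0 \to \bigvee\Psi_0)$, which gives some $u \in R_{\uparrow}[w']$ and some $u' \geq' u$ separating $\consec{\Phi_0}{\Psi_0}$. Here I must check that $u' \in R_{\uparrow}[w']$. This follows from the frame condition: $w' \leq' t' R' u \leq' u'$ yields $t''$ with $t' \leq' t'' R' u'$. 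Saturation of $R_{\uparrow}[w']$ then finishes this case.

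\textbf{Condition \eqref{it:sim-diamond-1}.} Let $wRv$. Only the positive part is available, so set $\Phi = \{\phi \mid \B, v \Vdash \phi\}$. For finite $\Phi_0$ we have $w \Vdash \Diamond\bigwedge\Phi_0$, hence $w' \Vdash \Diamond\bigwedge\Phi_0$. Thus $\Phi$ is finitely satisfiable in $R'[w']$, and positive saturation gives $s' \in R'[w']$ with $\B', s' \Vdash \Phi$.

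I expect the main obstacle to be what comes next. $s'$ need not be modally equivalent to $v$, since it may satisfy more formulas, and that is exactly why the condition allows passing to some $s \geq v$. I need $s \in {\uparrow}v$ realising the full theory of $s'$. Let $\Phi' = \{\phi \mid s' \Vdash \phi\}$ and $\Psi' = \{\psi \mid s' \not\Vdash \psi\}$; I want $\consec{\Phi'}{\Psi'}$ to be finitely separable in ${\uparrow}v$. Suppose instead that for some finite $\Phi'_0$, $\Psi'_0$ we had $v \Vdash \bigwedge\Phi'_0 \to \bigvee\Psi'_0$. That formula lies in $\Phi$, so $s'$ satisfies it. But $s'$ satisfies $\bigwedge\Phi'_0$ and refutes every $\psi \in \Psi'_0$, a contradiction. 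Hence $v \not\Vdash \bigwedge\Phi'_0 \to \bigvee\Psi'_0$, so some $s \geq v$ separates $\consec{\Phi'_0}{\Psi'_0}$. Saturation of ${\uparrow}v$ now gives $s \geq v$ with $s \logeq s'$, as required.
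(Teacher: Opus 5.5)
Your proof is correct. Like the paper, you show that modal equivalence is itself an IK-bisimulation. Your treatment of \eqref{it:sim-box-1} is the paper's argument run directly rather than by contradiction; the paper also uses frame condition (2) to see that $R'_{\uparrow}[w']$ is upward closed. You additionally spell out \eqref{it:sim-imp-1}, which the paper delegates to Patterson.

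The genuine difference is in \eqref{it:sim-diamond-1}. The paper argues by a single contradiction. Assuming no matching pair exists, it uses saturation of ${\uparrow}v$ to attach to each $s' \in R'[w']$ a formula $\xi_{s'} = \phi_{s'} \to \psi_{s'}$ that is true at $v$ and false at $s'$. It then uses positive saturation of $R'[w']$ to compress $\{\xi_{s'}\}$ to a finite conjunction $\xi$ with $w \Vdash \Diamond\xi$ but $w' \not\Vdash \Diamond\xi$. You apply the same two saturation properties in the opposite order, and constructively. Positive saturation of $R'[w']$ first yields an $s'$ whose theory contains that of $v$. Saturation of ${\uparrow}v$ then lifts $v$ to some $s \geq v$ with $s \logeq s'$.

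Your route buys modularity. The second step is the same finite-separability argument you used for \eqref{it:sim-imp-1}. Both are instances of one lemma: if ${\uparrow}a$ is saturated and every formula true at $a$ is true at $b$, then some $c \geq a$ is modally equivalent to $b$. So the $\Diamond$-clause reduces to positive saturation plus this lemma. It also makes transparent why the clause must allow passing to some $s \geq v$: the realiser $s'$ may have a strictly larger theory than $v$. The paper's route stays closer to the classical Hennessy--Milner proof and avoids an auxiliary lemma. Neither route needs more saturation hypotheses than the other.
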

\begin{proof}
  We prove that the relation $Z$ of modal equivalence  is an IK-bi\-si\-mu\-la\-tion.
  It clearly satisfies~\eqref{it:sim-prop},
  and~\eqref{it:sim-imp-1} and~\eqref{it:sim-imp-2} are similar
  to~\cite[Theorem~21]{Pat97}.
  We show that $Z$ satisfies~\eqref{it:sim-box-1} and~\eqref{it:sim-diamond-1},
  the remaining conditions being similar.
  For~\eqref{it:sim-box-1}, let $\Zin{w}{w'}$ and $w \ac v$
  and suppose towards a contradiction that there exists no
  $v' \in \ac'_{\uparrow}[w'] \coloneq \{ x' \in W' \mid w'({\leq'} \mathrel{;} \ac')x' \}$
  such that $\Zin{v}{v'}$.
  Then for each such $v'$ either
  \begin{itemize}\itemsep0pt
    \item there exists a formula $\phi_{v'}$ that is true at $v$ but not at $v'$; or
    \item there exists a formula $\psi_{v'}$ that is true at $v'$ but not at $v$.
  \end{itemize}
  For each $v' \in \ac'_{\uparrow}[w']$,
  pick such $\phi_{v'}$ or $\psi_{v'}$, and collect them in
  sets $\Phi$ and $\Psi$, respectively.
  Then there is no world in $\ac'_{\uparrow}[w']$ that
  separates $\consec{\Phi}{\Psi}$.
  Since $\ac'_{\uparrow}[w']$ is saturated, this implies that there exist
  finite subsets $\Phi' \subseteq \Phi$ and $\Psi' \subseteq \Psi$ such that
  no world in $\ac'_{\uparrow}[w']$ separates $\consec{\Phi'}{\Psi'}$.
  Set $\phi \coloneq \bigwedge \Phi'$ and $\psi \coloneq \bigvee \Psi'$.
  It follows from frame condition (2) on $\B'$ 
  that $w' ({\leq'} \mathrel{;} \ac' \mathrel{;} {\leq'}) s'$ implies
  $w' ({\leq'} \mathrel{;} {\ac'}) s'$, that is, $R'_\uparrow[w']$ is upward-closed under~$\leq'$.
  Therefore, each $v'$ such that
  $w' ({\leq'} \mathrel{;} {\ac'}) v'$ satisfies $\phi \to \psi$.
  Furthermore, we have $v \not\Vdash \phi \to \psi$, so that ultimately $w \not\Vdash \Box(\phi \to \psi)$ while $w' \Vdash \Box(\phi \to \psi)$, a contradiction.

  Next, to see that $\Z$ satisfies~\eqref{it:sim-diamond-1},
  let $\Zin{w}{w'}$ and $w \ac v$ and suppose towards a contradiction
  that there exist no $s \in W$ and $s' \in W'$ such that $v \leq s$ and $w' \ac' s'$
  and $\Zin{s}{s'}$.
  Note that, by assumption, in particular, ${\uparrow}v$ is saturated. So, similarly to the previous case,
  for each $s' \in R'[w']$ we can find formulas~$\phi_{s'}$ and $\psi_{s'}$
  such that $s'$ satisfies $\phi_{s'}$ and refutes $\psi_{s'}$,
  and each $s \geq v$ either refutes $\phi_{s'}$ or satisfies $\psi_{s'}$.
  Let $\xi_{s'} \coloneq \phi_{s'} \to \psi_{s'}$. Then $v \Vdash \xi_{s'}$ and $s' \not\Vdash \xi_{s'}$.
    Now let $\Xi \coloneq \{ \xi_{s'} \mid s' \in R'[w'] \}$.
    Then no world in $R'[w']$ satisfies $\Xi$.
    Since $R'[w']$ is positively saturated by assumption, it follows that there
    exists a finite $\Xi' \subseteq \Xi$ such that each $s' \in R'[w']$
    refutes some formula in $\Xi'$.
    Let $\xi \coloneq \bigwedge \Xi'$. Then $v \Vdash \xi$, so $w \Vdash \Diamond\xi$ while $w' \not\Vdash \Diamond\xi$,
    a contradiction.
\end{proof}

\section{Filter products of $\IL$-structures}
\label{sec:products}

  We define an intuitionistic analogue of the ultrafilter product of
  classical 
  first-order structures, and use this to prove compactness of the logic.
  We will also use it in Section~\ref{sec:saturation} to prove that every
  $\IL$-structure can be elementarily embedded in an $\omega$-saturated one.
  The notion of (ultra)filter product of $\IL$-structures
  ---without constant symbols---
  may be found in~\cite{Gab72} and~\cite[Chapter~III]{Mar79}.
  To improve overall legibility, details of some of our constructions and proofs of some of our results were shifted to Appendix~\ref{app:filter-product}.
  The remaining work in this section mirrors the development of classical
  model theory (see e.g.~\cite{ChaKei73}).

\begin{definition}
  Let $I$ be a set, and for each $i \in I$ let
  $\mo{M}_i = (W_i, \leq_i, \{ \mo{C}_{i,w} \}_{w \in W_i})$
  be an $\IL$-structure, where $\mo{C}_{i, w} = (D_{i, w}, \II_{i, w})$.
  Let $F$ be a filter on $I$. Then:
  \begin{enumerate}
    \item Take $W$ to be the reduced product $\prod_{i \in I}^F W_i$, 
          namely the elements in $\prod_{i \in I} W_i$ modulo the equivalence relation
          given by $\alpha \sim \beta$ iff $\{ i \in I \mid \alpha(i) = \beta(i) \} \in F$.
          The equivalence class of $\alpha$ is denoted by~$\alpha_F$.
    \item Define the relation $\leq$ on $W$ by $\alpha_F \leq \beta_F$ if and only if
          $\{ i \in I \mid \alpha(i) \leq_i \beta(i) \} \in F$
    \item For each $i \in I$, let $D_i \coloneq D_{\mo{M}_i} = \bigcup_{w \in W_i} D_{i,w}$.
          Let $D = \prod_{i \in I}^F D_i$ be the reduced product of the $D_i$, namely
          the elements in $\prod_{i \in I} D_i$ modulo the equivalence relation
          $\xi \approx \eta$ iff $\{ i \in I \mid \xi(i) = \eta(i) \} \in F$,
          and denote the equivalence classes by $\xi_F$.
          Define the domain $D_{\alpha_F}$ at $\alpha_F \in W$ by
          \begin{equation*}
            D_{\alpha_F} \coloneq \{ \xi_F \in D \mid \{ i \in I \mid \xi(i) \in D_{i,\alpha(i)} \} \in F \}.
          \end{equation*}
    \item For $c \in \Con$,  define
          $\tilde{c} : I \to \bigcup_{i \in I} D_i$ by $\tilde{c}(i) = c^{\mo{M}_i}$,
          and let $\II_{\alpha_F}(c) \coloneq \tilde{c}_F$ if $\tilde{c}_F \in D_{\alpha_F}$
          and leave the interpretation of $c$ undefined otherwise.
    \item Finally, for each $n$-ary predicate $P \in \Pred$, define its interpretation by
          \begin{equation*}
            (\xi_F^1, \ldots, \xi_F^n) \in \II_{\alpha_F}(P)
              \iff \{ i \in I \mid (\xi^1(i), \ldots, \xi^n(i)) \in \II_{i, \alpha(i)}(P) \} \in F.
        \end{equation*}
  \end{enumerate}
  For each $\alpha_F \in W$ we get a $\CL$-structure $\mo{C}_{\alpha_F} = (D_{\alpha_F}, \II_{\alpha_F})$.
  The \emph{filter product} of the $\mo{M}_i$ is given by
  \begin{equation*}
    \prod_{i \in I}^F \mo{M}_i = (W, \leq, \{ \mo{C}_{\alpha_F} \}_{\alpha_F \in W}).
  \end{equation*}
\end{definition}

\begin{proposition}\label{prop:filter-product}
  Let $I$ be a set, and for each $i \in I$ let $\mo{M}_i$ be an
  $\IL$-structure. Let $F$ be a filter on $I$.
  Then the filter product $\prod_{i \in I}^F \mo{M}_i$ is an $\IL$-structure.
\end{proposition}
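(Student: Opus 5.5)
We check the clauses of Definition~\ref{def:IL-structure} one at a time, after first confirming that every ingredient of the filter product is well defined, i.e.\ independent of the chosen representatives $\alpha$ and $\xi$. For $W$, $D$ and the relation $\leq$ this is the usual reduced-product argument. If $\alpha\sim\alpha'$ and $\beta\sim\beta'$, then $\{i \mid \alpha'(i)\leq_i\beta'(i)\}$ contains the intersection of three sets in $F$: $\{i\mid\alpha(i)\leq_i\beta(i)\}$, $\{i\mid\alpha(i)=\alpha'(i)\}$ and $\{i\mid\beta(i)=\beta'(i)\}$. Closure of $F$ under finite intersections and supersets then gives the claim. The same argument works for the definition of $D_{\alpha_F}$ and for $\II_{\alpha_F}(P)$, where we intersect with the finitely many agreement sets of the representatives. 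With this in place, $\leq$ is a partial order. Reflexivity holds because $I\in F$. Transitivity follows by intersecting two sets in $F$. Antisymmetry holds because $\{i\mid\alpha(i)\leq_i\beta(i)\}\cap\{i\mid\beta(i)\leq_i\alpha(i)\}\subseteq\{i\mid\alpha(i)=\beta(i)\}$, using antisymmetry of each $\leq_i$.

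Each $\mo{C}_{\alpha_F}$ must be a $\CL$-structure, so $D_{\alpha_F}$ has to be nonempty. For each $i$ we pick $\xi(i)\in D_{i,\alpha(i)}$, which is possible since every $D_{i,w}$ is nonempty; this uses the axiom of choice. Then $\{i\mid\xi(i)\in D_{i,\alpha(i)}\}=I\in F$, so $\xi_F\in D_{\alpha_F}$. We also need $\II_{\alpha_F}(P)\subseteq D_{\alpha_F}^n$, or at least compatibility with however the paper treats tuples outside the current domain. In each component, $(\xi^1(i),\ldots,\xi^n(i))\in\II_{i,\alpha(i)}(P)$ forces $\xi^j(i)\in D_{i,\alpha(i)}$. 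So the $F$-large set defining membership in $\II_{\alpha_F}(P)$ is contained in each $\{i\mid\xi^j(i)\in D_{i,\alpha(i)}\}$, which therefore lies in $F$.

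Clause (1), monotonicity, is where the filter properties do the main work. Suppose $\alpha_F\leq\beta_F$ and $\xi_F\in D_{\alpha_F}$. Then $\{i\mid\alpha(i)\leq_i\beta(i)\}\cap\{i\mid\xi(i)\in D_{i,\alpha(i)}\}\in F$. By clause (1) for each $\mo{M}_i$, this intersection is contained in $\{i\mid \xi(i)\in D_{i,\beta(i)}\}$, which is therefore in $F$. The same argument gives $\II_{\alpha_F}(P)\subseteq\II_{\beta_F}(P)$.

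Clause (2), on constants, is the step I expect to need the most care. We take $c^{\mo{M}}\coloneq\tilde c_F$. This lies in $D$ because each $c^{\mo{M}_i}\in D_i$. For each $\alpha_F$ we set $c\in\Con_{\alpha_F}$ exactly when $\tilde c_F\in D_{\alpha_F}$, and then $\II_{\alpha_F}(c)=\tilde c_F$; this is what item~4 of the construction prescribes, so both bullet conditions hold by definition. The remaining requirement is $\tilde c_F\in\bigcup_{\alpha_F}D_{\alpha_F}$, and here the definition of the domain matters. If $D_{\mo{M}}$ is read literally as the union of the local domains, we need a witness: for each $i$ choose $\alpha(i)\in W_i$ with $c^{\mo{M}_i}\in D_{i,\alpha(i)}$, which exists since $c^{\mo{M}_i}\in D_{\mo{M}_i}$. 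Then $\tilde c_F\in D_{\alpha_F}$. The same choice argument shows that every $\xi_F\in D$ lies in some $D_{\alpha_F}$, so the global domain $D$ coincides with $\bigcup_{\alpha_F}D_{\alpha_F}$, as Definition~\ref{def:IL-structure} requires. This identification of the global domain is the subtle point. The remaining verifications are routine and would go to the appendix.
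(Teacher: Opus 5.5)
Your proposal is correct and follows essentially the same route as the paper, which obtains the proposition from Lemmas~\ref{lem:B1}, \ref{lem:D} and~\ref{lem:B3} together with Step~4 of the appendix: well-definedness by intersecting agreement sets in $F$, the partial-order check, monotonicity of domains and predicates, $\II_{\alpha_F}(P)\subseteq D_{\alpha_F}^n$, and $D=\bigcup_{\alpha_F}D_{\alpha_F}$ by choosing witness worlds componentwise (which also handles the constants). Your explicit check that each $D_{\alpha_F}$ is nonempty is a small point the paper leaves implicit.
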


\begin{definition}
  Let $I$ be a set, let $\mo{M}_i$ be an $\IL$-structure, for each $i \in I$, and let $F$ be a filter over $I$.
  If $F$ is an ultrafilter, then the filter product $\prod_{i \in I}^F \mo{M}_i$ is called an
  \emph{ultraproduct}.
  Finally, if $\mo{M}_i = \mo{M}$ for all $i \in I$,
  then the ultraproduct is called an \emph{ultrapower} of $\mo{M}$. 
\end{definition}

  The next goal is the following analogue of {\L}o{\'s}'s Theorem.
  This requires the following notion of a product of assignments.

\begin{definition}\label{def:prod-assignment}
  Suppose $\rho_i$ is an assignment for $\mo{M}_i$, for each $i \in I$.
  For each $x \in \Var$, define
  $\rho(x) : I \to \bigcup_{i \in I} D_i : i \mapsto \rho_i(x)$.
  Then the \emph{product assignment} $\rho_F$
  maps $x$ to the equivalence class of $\rho(x)$,
  i.e.~$\rho_F(x) \coloneq \rho(x)_F$.
\end{definition}

  We obtain the following analogue of {\L}o{\'s}'s Theorem,
  a proof of which can be found in Appendix~\ref{app:filter-product}.

\begin{theorem}\label{thm:Los}
  Let $I$ be a set, and for each $i \in I$ let $\mo{M}_i$ be an $\IL$-structure
  and $\rho_i$ an assignment for~$\mo{M}_i$.
  Let $F$ be an ultrafilter on $I$,
  $\mo{M} \coloneq \prod_{i \in I}^F \mo{M}_i$ be the filter product,
  and $\rho_F$ be
  the product assignment for~$\mo{M}$.
  Then for any world $\alpha_F \in W$ and any formula $\phi(x_1, \ldots, x_n)$
  such that $\rho_F(x_1), \ldots, \rho_F(x_n) \in D_{\alpha_F}$,
  \begin{equation*}
    \satisfy{\mo{M}}{\alpha_F}{\phi}{\rho_F}
      \iff \{ i \in I \mid \satisfy{\mo{M}_i}{\alpha(i)}{\phi}{\rho_i} \} \in F.
  \end{equation*}
\end{theorem}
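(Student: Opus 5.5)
We prove the equivalence by induction on the structure of $\phi$, simultaneously for all worlds $\alpha_F$ and all families of assignments $(\rho_i)_{i\in I}$ satisfying the domain condition.

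\textbf{Preliminaries.} Before the induction we record two facts.

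First, all relevant notions are well defined on equivalence classes, which is Proposition~\ref{prop:filter-product} and its appendix material. Hence we may change representatives $\alpha$ and $\rho_i$ on an $F$-small set without affecting either side.

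Second, the domain hypothesis localises. Since $\rho_F(x_j)\in D_{\alpha_F}$, the set $J=\{i \mid \rho_i(x_j)\in D_{i,\alpha(i)}\text{ for all } j\}$ lies in $F$. The same holds for constants occurring in $\phi$ when $\tilde c_F\in D_{\alpha_F}$. When $\tilde c_F\notin D_{\alpha_F}$, the set of $i$ with $c^{\mo{M}_i}\in D_{i,\alpha(i)}$ is not in $F$, so its complement is in $F$, because $F$ is an ultrafilter.

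\textbf{Atomic case and propositional connectives.} For atomic $P(t_1,\dots,t_n)$ the left side holds iff $(\rho^{\mo M}_F(t_k))_k\in\II_{\alpha_F}(P)$. By the definition of the product interpretation, this holds iff the corresponding $F$-set is in $F$. For constants we use $\rho_F^{\mo M}(c)=\tilde c_F$ together with the remark that a tuple outside the domain is never in $\II$; this is consistent on both sides because membership in $\II_{i,\alpha(i)}(P)$ forces membership in $D_{i,\alpha(i)}$.

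The cases $\bot$, $\wedge$ and $\vee$ are as in the classical proof. For $\vee$ we use that $A\cup B\in F$ implies $A\in F$ or $B\in F$.

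\textbf{The case $\exists x\,\psi$.} Suppose $X=\{i \mid \mo M_i,\alpha(i)\Vdash\exists x\psi\}\in F$. For each $i\in X$ choose a witness $d_i\in D_{i,\alpha(i)}$; for $i\notin X$ choose any element of $D_i$. Let $\xi=(d_i)_i$. Then $\xi_F\in D_{\alpha_F}$, and the induction hypothesis applied to the assignments $\rho_i[x\coloneq d_i]$ yields the left side. Conversely, a witness $\xi_F\in D_{\alpha_F}$ gives $F$-many $i$ with $\xi(i)\in D_{i,\alpha(i)}$, and the induction hypothesis then gives the right side.

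\textbf{The cases $\to$ and $\forall$: the main obstacle.} These quantify over successors and are where the real work lies.

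For $\phi\to\psi$, suppose first that the $F$-set $Y$ of $i$ with $\alpha(i)\Vdash\phi\to\psi$ is in $F$. Take $\beta_F\ge\alpha_F$ with $\beta_F\Vdash\phi$. Modify $\beta$ off an $F$-set so that $\alpha(i)\le_i\beta(i)$ for all $i$; where this fails, set $\beta(i)=\alpha(i)$. The domain condition persists. By the induction hypothesis, $F$-many $i$ have $\beta(i)\Vdash\phi$, so $F$-many have $\beta(i)\Vdash\psi$, and the induction hypothesis again gives $\beta_F\Vdash\psi$.

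For the converse, argue by contraposition. If $Y\notin F$, then its complement is in $F$, since $F$ is an ultrafilter. For each $i$ in the complement pick $\beta(i)\ge\alpha(i)$ with $\beta(i)\Vdash\phi$ and $\beta(i)\not\Vdash\psi$; elsewhere set $\beta(i)=\alpha(i)$. Then $\beta_F\ge\alpha_F$, and the domain condition persists by monotonicity of domains. By the induction hypothesis (using ultrafilter-ness for the refutation), $\beta_F$ refutes $\phi\to\psi$'s consequent while satisfying its antecedent, so $\alpha_F\not\Vdash\phi\to\psi$.

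The $\forall$ case is handled the same way. In the forward direction we pick $\beta_F\ge\alpha_F$ and $\xi_F\in D_{\beta_F}$, normalise the representatives, and apply the induction hypothesis to $\rho_i[x\coloneq\xi(i)]$. For the converse we choose, on the $F$-large failure set, pairs $(\beta(i),d_i)$ with $d_i\in D_{i,\beta(i)}$ refuting $\psi$. We then check that $\xi_F\in D_{\beta_F}$ and that the old variables remain in $D_{\beta_F}$ by monotonicity of domains.

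The delicate points throughout are the choice of representatives so that $\le$ holds pointwise rather than merely $F$-almost everywhere, and the repeated use of the ultrafilter property to turn failure of membership into membership of the complement.
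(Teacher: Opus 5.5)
Your proposal is correct and follows essentially the same route as the paper. Both argue by structural induction, uniform in the world and the family of assignments. Both use primeness of $F$ for $\vee$, and use the ultrafilter property to build a witnessing successor $\beta$ (and individual $\xi$) on an $F$-large failure set in the converse directions of the $\to$ and $\forall$ cases.

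Normalising representatives so that $\alpha(i)\le_i\beta(i)$ holds pointwise is only a cosmetic variant of the paper's intersecting with $\{i \mid \alpha(i)\le_i\beta(i)\}\in F$. Your explicit intersection with $\{i \mid \xi(i)\in D_{i,\alpha(i)}\}\in F$ in the $\exists$ case actually fills a small step that the paper's ``clearly'' glosses over.
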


  As an application of {\L}o{\'s}'s Theorem, we prove that every $\IL$-structure
  can be elementarily embedded in any of its ultrapowers.
  We begin by defining an intuitionistic analogue of elementary embeddings.

\begin{definition}
  Let $\mo{M} = (W, \leq, \{ \mo{C}_{w} \}_{w \in W})$ and
  $\mo{M}' = (W', \leq', \{ \mo{C}_{w'} \}_{w' \in W'})$ be two $\IL$-structures.
  An \textit{elementary embedding} of $\mo{M}$ into $\mo{M'}$ is a pair
  $(\epsilon, \eta)$ of injective functions
  $\epsilon : W \to W'$ and $\eta : D_{\mo{M}} \to D_{\mo{M}'}$ such that
  for every $\phi \in \mc{L}$, $w \in W$ and assignment $\rho$:
  \begin{equation*}
    \satisfy{\mo{M}}{w}{\phi}{\rho}
      \iff \satisfy{\mo{M}'}{\epsilon(w)}{\phi}{\eta \circ \rho}.
  \end{equation*}
\end{definition}

\begin{definition}
  Let $\mo{M} = (W, \leq, \{ \mo{C}_w \}_{w \in W})$ be an $\IL$-structure,
  $I$ be a set, $F$ be an ultrafilter over $I$, and write
  $\mo{M}^* = (W^*, \leq^*, \{ \mo{C}_{\alpha_F} \}_{\alpha_F \in W^*})$
  for the ultrapower of $\mo{M}$ modulo $F$.
  As in Definition \ref{def:IL-structure}, let $D_{\mo{M}^*} \coloneq \bigcup_{\alpha_F \in W^*} D_{\alpha_F}$.
  For each $w \in W$, let $w^* \in W^* \coloneq \prod_{i \in I}^F W$ be the equivalence class of the
  constant function $I \to W : i \mapsto w$
  and for each $d \in D_{\mo{M}}$ let $d^* \in D_{\mo{M}^*} = \prod_{i \in I}^F D_{\mo{M}}$
  be the equivalence class of the constant map $I \to D_{\mo{M}} : i \mapsto d$.
  The \emph{natural embedding} of $\mo{M}$ into $\mo{M}^*$ is given by
  \begin{equation*}
    \epsilon : W \to W^* : w \mapsto w^*
    \quad\text{and}\quad
    \eta : D_{\mo{M}} \to D_{\mo{M}^*} : d \mapsto d^*.
  \end{equation*}
\end{definition}

\begin{proposition}\label{prop:ultra-embedding}
  Let $\mo{M}$ be an $\IL$-structure, $I$ be a set and $F$ be an ultrafilter over $I$.
  The natural embedding of $\mo{M}$ into $\mo{M}^*$ is an elementary embedding.
\end{proposition}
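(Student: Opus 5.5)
The plan is to check three things in turn: injectivity of $\epsilon$ and $\eta$, that $\eta\circ\rho$ is a legitimate assignment for $\mo{M}^*$, and the equivalence of truth values. Everything after the first step is a direct application of Theorem~\ref{thm:Los}.

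\emph{Injectivity.} Suppose $w \neq v$. Then the constant functions for $w$ and $v$ agree on $\emptyset$, and $\emptyset \notin F$. Hence $w^* \neq v^*$. The same argument shows $\eta$ is injective.

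\emph{Assignments.} Fix an assignment $\rho$ for $\mo{M}$ and set $\rho_i \coloneq \rho$ for every $i \in I$. By Definition~\ref{def:prod-assignment}, $\rho(x)$ is the constant function $i \mapsto \rho(x)$. So the product assignment satisfies $\rho_F(x) = \rho(x)^* = (\eta\circ\rho)(x)$; that is, $\rho_F = \eta\circ\rho$.

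\emph{Domain side condition.} Theorem~\ref{thm:Los} only applies when the free variables of $\phi$ are sent into $D_{w^*}$, and this need not hold. The domain condition reads
\[
d^* \in D_{w^*} \iff \{ i \mid d \in D_w \} \in F \iff d \in D_w ,
\]
because the index set is either all of $I$ or $\emptyset$. So I split into two cases according to the free variables $x_1,\dots,x_n$ of $\phi$.

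\emph{Case 1: every $\rho(x_j)$ lies in $D_w$.} Then every $\rho_F(x_j)$ lies in $D_{w^*}$, and Theorem~\ref{thm:Los} gives
\[
\satisfy{\mo{M}^*}{w^*}{\phi}{\eta\circ\rho} \iff \{ i \mid \satisfy{\mo{M}}{w}{\phi}{\rho} \} \in F .
\]
The right-hand index set is $I$ or $\emptyset$, so this holds iff $\satisfy{\mo{M}}{w}{\phi}{\rho}$.

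\emph{Case 2: some $\rho(x_j)$ lies outside $D_w$.} This is the main obstacle, since Theorem~\ref{thm:Los} does not apply directly at $w^*$. There are two possible routes.

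The first route is to inspect the proof of Theorem~\ref{thm:Los} in Appendix~\ref{app:filter-product}. I expect that its induction only uses the side condition for the $\exists$ case and for domain bookkeeping. In that case the biconditional may hold for arbitrary assignments, at least when the assignment is a product of constant assignments.

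The second route, failing that, is a direct structural induction on $\phi$ proving
\[
\satisfy{\mo{M}}{w}{\phi}{\rho} \iff \satisfy{\mo{M}^*}{w^*}{\phi}{\eta\circ\rho}
\]
for all $w$ and $\rho$ simultaneously, invoking Theorem~\ref{thm:Los} wherever the side condition is met. The cases go as follows.
\begin{itemize}
\item \textbf{Atoms.} Atomic formulas fail on both sides when an argument is out of the domain: on the left by the remark after Definition~\ref{def:IL-structure}, and on the right because $\II_{w^*}(P)$ only contains tuples from $D_{w^*}$.
\item \textbf{Propositional connectives} other than $\to$ are immediate.
\item \textbf{$\exists$.} The witness ranges over $D_w$, respectively $D_{w^*}$. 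A witness $\xi_F$ in $D_{w^*}$ gives, via Theorem~\ref{thm:Los}, $F$-many coordinates $i$ with $\satisfy{\mo{M}}{w}{\phi}{\rho[x\coloneq \xi(i)]}$.
\item \textbf{$\to$ and $\forall$.} These quantify over successors $\beta_F \geq w^*$, which need not be of the form $v^*$. For a successor $\beta_F$ of $w^*$, I would apply Theorem~\ref{thm:Los} at $\beta_F$ to the product of constant assignments whenever the side condition holds there. When it does not, the relevant atomic subformulas fail in the same coordinatewise way, and persistence (Lemma~\ref{mon.ifol}) handles the transfer along $w \leq \beta(i)$.
\end{itemize}
I expect this coordinatewise bookkeeping for $\to$ and $\forall$ to be the delicate part. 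It is the step I would check most carefully against the proof of Theorem~\ref{thm:Los}.
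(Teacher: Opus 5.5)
Your injectivity argument, the identity $\rho_F=\eta\circ\rho$, and Case~1 together make up the paper's entire proof. The paper applies Theorem~\ref{thm:Los} to an arbitrary $\rho$ without mentioning the side condition, so your worry is legitimate for the theorem as literally stated. But you do not resolve it: Case~2 is left as two expectations, and that is a real gap.

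The missing observation is that the induction proving Theorem~\ref{thm:Los} in Appendix~\ref{app:filter-product} never uses the hypothesis $\rho_F(x_j)\in D_{\alpha_F}$.
\begin{itemize}
\item In the atomic case the equivalence is just the definition of $\II_{\alpha_F}(P)$, which is stated for arbitrary elements of $D$. By Lemma~\ref{lem:B3} and $\II_{i,\alpha(i)}(P)\subseteq D_{i,\alpha(i)}^n$, both sides simply fail together when an argument is out of the domain.
\item The cases for $\wedge$, $\vee$, $\to$ and $\forall$ only call the induction hypothesis at other worlds and at modified assignments.
\item In the $\exists$ case, contrary to your guess, the bookkeeping needed is $\{i\in I\mid \xi(i)\in D_{i,\alpha(i)}\}\in F$ for the \emph{witness} $\xi_F$. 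The appendix's ``clearly'' inclusion should be intersected with this set. That membership comes from $\xi_F\in D_{\alpha_F}$, i.e.\ from the semantics of $\exists$, not from any condition on the free variables.
\end{itemize}
Hence \L{}o{\'s}'s Theorem holds for every world and every family $(\rho_i)_{i\in I}$, not only products of constant assignments. Your Case~1 computation then covers all $\rho$, and the case split disappears.

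Route~2, as sketched, does not work on its own. In the $\to$ and $\forall$ cases the successors $\beta_F\geq w^*$ are generally not of the form $v^*$. Your induction hypothesis says nothing about them, so you must invoke Theorem~\ref{thm:Los} at $\beta_F$, which is precisely where the side condition can fail again. Nor does ``atomic subformulas fail coordinatewise, and persistence handles the rest'' settle compound formulas. For instance, the value of $P(x)\to Q(x)$ at a world where $\rho(x)$ is outside the domain depends on later worlds where $\rho(x)$ enters it. So Route~2 needs the unrestricted \L{}o{\'s} Theorem anyway.

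If you prefer to use Theorem~\ref{thm:Los} exactly as stated, there is another fix. Replace each free $x_j$ by a fresh constant $\und{a_j}$ with $a_j=\rho(x_j)$. Constants may denote elements outside the current domain, and the side condition concerns only variables. The ultrapower of $(\mo{M})_A$ is $\mo{M}^*$ with $\und{a_j}$ read as $a_j^*$. A substitution lemma then reduces Case~2 to a sentence, for which the side condition is vacuous.
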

\begin{proof}
  The maps $w \mapsto w^*$ and $d \mapsto d^*$ are clearly injective.
  Let $\rho$ be any assignment for $\mo{M}$, and let
  $\rho^*$ be the product assignment on $\mo{M}^*$ induced by $\rho$
  (as in Definition~\ref{def:prod-assignment}).
  Then $\rho^* = \eta \circ \rho$.
  Furthermore, for any $w \in W$ we have $w^*(i) = w$,
  so we get
  \begin{align*}
    \satisfy{\mo{M}^*}{w^*}{\phi}{\eta \circ \rho}
      &\iff \satisfy{\mo{M}^*}{w^*}{\phi}{\rho^*}
      &\text{(because $\rho^* = \eta \circ \rho$)} \\
      &\iff \{ i \in I \mid \satisfy{\mo{M}}{w^*(i)}{\phi}{\rho} \} \in F
      &\text{(Theorem~\ref{thm:Los})} \\
      &\iff \{ i \in I \mid \satisfy{\mo{M}}{w}{\phi}{\rho} \} \in F
      &\text{(because $w^*(i) = w$)} \\
      &\iff \satisfy{\mo{M}}{w}{\phi}{\rho}
  \end{align*}
  The last ``iff'' follows from the fact that
  $\{ i \in I \mid \satisfy{\mo{M}}{w}{\phi}{\rho} \}$ is either
  empty (if $\notsatisfy{\mo{M}}{w}{\phi}{\rho}$)
  or equal to~$I$ (if $\satisfy{\mo{M}}{w}{\phi}{\rho}$),
  so it is in $F$ if and only if the latter is the case.
\end{proof}

  Finally, still mirroring the classical case,
  we use \L{}o{\'s}'s Theorem~\ref{thm:Los} to prove compactness of the logic.

\begin{definition}
  Given $\Gamma,\Delta \subseteq \LL$, we say that $\Delta$
  \emph{is a consequence of} $\Gamma$, and write $\Gamma\models \Delta$,
  if for every $\IL$-structure $\mo{M}$, 
  there is no $w \in W$ and no assignment $\rho$ 
  that separate $\consec{\Gamma}{\Delta}$ in $\mo{M}$.
  We say that two formulas $\varphi$ and $\psi$ are \emph{logically equivalent} if $\{\varphi\}\models \{\psi\}$ and $\{\psi\}\models \{\varphi\}$. 
\end{definition}

\begin{theorem}\label{compactness}
  Let $\consec{\Gamma}{\Delta}(x_1,\ldots,x_n)$
  be a consecution in $\LL$ and let $I$ be the set of all of its finite sub\-con\-se\-cu\-tions, i.e.~$
    I = \{ \consec{\Gamma\finset}{\Delta\finset} \mid \Gamma\finset \subseteq \Gamma \text{ and } \Delta\finset \subseteq \Delta \text{ are finite} \}.
  $
  For each $i \in I$, let $\mo{M}_i = (W_i, {\leq_i,} \{ \mo{C}_{i, w} \}_{w \in W_i})$
  be an $\IL$-structure, $w_i \in W_i$
  and $\rho_i$ an assignment such that $\satisfy{\mo{M}_i}{w_i}{i}{\rho_i}$. 
  Then there exists an ultrafilter $F$ over $I$ such that
  $\satisfy{\prod_{i \in I}^F \mo{M}_i}{ \{ (i, w_i) \mid i \in I \}_F}{\consec{\Gamma}{\Delta}}{\rho_F}$,
  where $\rho_F$ is the product assignment of the $\rho_i$.
\end{theorem}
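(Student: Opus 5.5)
This follows the classical ultraproduct proof of compactness, with \L{}o\'s's Theorem~\ref{thm:Los} carrying the semantic weight. For each formula $\phi \in \Gamma \cup \Delta$ let
\begin{equation*}
  \hat\phi \coloneq \{ i \in I \mid \phi \text{ occurs in the finite subconsecution } i \}.
\end{equation*}
The family $\{\hat\phi \mid \phi \in \Gamma\cup\Delta\}$ has the finite intersection property. Given $\phi_1,\ldots,\phi_k$, the finite subconsecution obtained by putting each $\phi_j$ on the side(s) of $\consec{\Gamma}{\Delta}$ where it occurs lies in every $\hat\phi_j$. So it extends to an ultrafilter $F$ on $I$, and we fix such an $F$.

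Write $\mo{M} \coloneq \prod_{i\in I}^F \mo{M}_i$, and let $\alpha$ be the function $i \mapsto w_i$, so the world in the statement is $\alpha_F$. Before invoking \L{}o\'s we must check its side condition: $\rho_F(x_j) \in D_{\alpha_F}$ for each free variable $x_j$ of the consecution. I expect this to be the main (though mild) obstacle, because the hypothesis only gives separation, and separating a formula need not force its free variables into the local domain. For formulas in $\Gamma$ there is no problem, by the remark after Definition~\ref{def:IL-structure}, and by an easy induction if non-atomic. The robust fix is to use the freedom to shrink the index set. The set $\{ i \mid \rho_i(x_j) \in D_{i,w_i} \}$ can be forced into $F$ by adding it to the generating family, provided the augmented family still has the finite intersection property. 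Alternatively, one can read the statement as tacitly requiring each $\rho_i(x_j)\in D_{i,w_i}$, the natural convention for assignments relevant at $w_i$, which may be imposed in the hypothesis. With $\{i \mid \rho_i(x_j)\in D_{i,w_i}\}\in F$ for all $j\le n$, the definition of $D_{\alpha_F}$ gives $\rho_F(x_j)\in D_{\alpha_F}$. Since the consecution has only the finitely many free variables $x_1,\ldots,x_n$, only finitely many such sets are added.

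Now take $\phi\in\Gamma$. For every $i\in\hat\phi$ we have $\phi \in \Gamma_i$, and $w_i,\rho_i$ separate $i$, so $\satisfy{\mo{M}_i}{w_i}{\phi}{\rho_i}$. Hence $\{ i \mid \satisfy{\mo{M}_i}{\alpha(i)}{\phi}{\rho_i}\} \supseteq \hat\phi \in F$, and Theorem~\ref{thm:Los} yields $\satisfy{\mo{M}}{\alpha_F}{\phi}{\rho_F}$.

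Dually, take $\psi\in\Delta$. For every $i\in\hat\psi$, $w_i$ refutes $\psi$ under $\rho_i$, so the set $\{ i \mid \satisfy{\mo{M}_i}{\alpha(i)}{\psi}{\rho_i}\}$ is disjoint from $\hat\psi\in F$. Being an ultrafilter, $F$ therefore does not contain it, and \L{}o\'s gives that $\alpha_F$ refutes $\psi$ under $\rho_F$. Together these two cases show that $\alpha_F$ and $\rho_F$ separate $\consec{\Gamma}{\Delta}$. This is the one place where maximality of $F$ is used: refutation is not preserved by plain filter products.
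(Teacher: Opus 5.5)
Your skeleton is the paper's: an ultrafilter extending a family of index sets with the finite intersection property, then \L{}o\'s's Theorem~\ref{thm:Los}, with maximality of $F$ handling $\Delta$. Your single-formula sets $\hat\phi$ are slightly more robust than the paper's pair sets $\overline{(\gamma,\delta)}$, which impose nothing when $\Gamma$ or $\Delta$ is empty. You are also right that Theorem~\ref{thm:Los} carries the side condition $\rho_F(x_j)\in D_{\alpha_F}$, which the paper's proof never checks.

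However, none of your three ways of securing that condition works, so the proposal has a gap exactly there. (i) Satisfying a formula does not force its free variables into the local domain once the formula is non-atomic: $P(x)\to P(x)$ holds at every world under every assignment. (ii) The augmented family can fail the finite intersection property. Suppose $\forall y\,P(y)\in\Gamma$ and $P(x)\in\Delta$. Then every $i$ containing both can only be separated with $\rho_i(x)\notin D_{i,w_i}$, since otherwise reflexivity of $\leq_i$ gives $P(\rho_i(x))$ at $w_i$. Hence $\{i\mid\rho_i(x)\in D_{i,w_i}\}\cap\hat\gamma\cap\hat\delta=\emptyset$. (iii) Adding $\rho_i(x_j)\in D_{i,w_i}$ to the hypotheses proves a weaker statement that no longer yields Corollary~\ref{cor.compactness}. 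There the witnesses $w_i,\rho_i$ come from $\Gamma\finset\not\models\Delta\finset$, which ranges over all assignments, and by (ii) they cannot in general be chosen with the variables in the local domain.

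The missing observation is that the side condition is superfluous. \L{}o\'s's Theorem holds for every world $\alpha_F$ and every product assignment $\rho_F$, by the same induction as in Appendix~\ref{app:filter-product}. In the atomic case both sides coincide by the very definition of $\II_{\alpha_F}(P)$, which is given for arbitrary tuples from $D$, not only from $D_{\alpha_F}$. The clauses for $\to$, $\forall$ and $\exists$ apply the induction hypothesis only at other worlds or updated assignments, and never use the condition. The one point needing care is the left-to-right direction for $\exists x\,\psi$: there you intersect with $\{i\mid\xi(i)\in D_{i,\alpha(i)}\}$, which is in $F$ because $\xi_F\in D_{\alpha_F}$ by the semantics of $\exists$. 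With this unconditional form of \L{}o\'s, your argument goes through verbatim, and there is no need to shrink the index set.
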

\begin{proof}
  For each pair $(\gamma,\delta)$ such that
  $\gamma \in \Gamma$ and $\delta \in \Delta$,
  define 
  \begin{equation*}
    \overline{(\gamma,\delta)} = \{ \consec{\Gamma\finset}{\Delta\finset} \in I \mid \gamma \in \Gamma\finset \text{ and } \delta \in \Delta\finset \}
  \quad\text{and}\quad
    E = \{\overline{(\gamma,\delta)} \mid \gamma \in \Gamma \text{ and } \delta \in \Delta \}.
  \end{equation*}
  Then $E$ has the finite intersection property because for any given $\overline{(\gamma_1,\delta_1)},\ldots, \overline{(\gamma_n,\delta_n)} \in E$ we have
  \begin{equation*}
    \consec{ \{\gamma_1,\ldots,\gamma_n\}}{\{\delta_1,\ldots,\delta_n\} }  \in {\overline{(\gamma_1,\delta_1)}}\cap\cdots\cap {\overline{(\gamma_n,\delta_n)}}.
  \end{equation*}
  By the ultrafilter theorem \cite[Theorem~4.1.3]{ChaKei73} there exists an 
  ultrafilter $F$ on $I$ such that $E \subseteq F$.
  Now consider an arbitrary pair $(\gamma, \delta)$ such that
  $\gamma \in \Gamma$ and $\delta \in \Delta$, and $i= \consec{\Gamma\finset}{\Delta\finset} \in I$. 
  If $i \in \overline{(\gamma,\delta)}$, then $\gamma \in \Gamma\finset, \delta \in \Delta\finset$
  and hence $\satisfy{\mo{M}_i}{w_i}{\gamma}{\rho_i}$ and $\notsatisfy{\mo{M}_i}{w_i}{\delta}{\rho_i}$. Therefore,
  $\overline{(\gamma,\delta)} \subseteq \{ i \in I \mid \satisfy{\mo{M}_i}{w_i}{\gamma}{\rho_i}\}$
  and $\overline{(\gamma,\delta)} \subseteq \{ i \in I \mid \notsatisfy{\mo{M}_i}{w_i}{\delta}{\rho_i} \}$,
  and since $\overline{(\gamma, \delta)} \in F$ by construction we get
  \begin{equation*}
    \{ i \in I \mid \satisfy{\mo{M}_i}{w_i}{\gamma}{\rho_i}\} \in F
    \quad\text{and}\quad
    \{ i \in I \mid \notsatisfy{\mo{M}_i}{w_i}{\delta}{\rho_i}\} \in F.
  \end{equation*}
  Because $F$ is an ultrafilter we get
  $\{ i \in I \mid \satisfy{\mo{M}_i}{w_i}{\delta}{\rho_i}\} \not\in F$,
  so Theorem~\ref{thm:Los} entails
  \begin{equation*}
    \satisfy{\prod_{i \in I}^F\mo{M}_i}{\{(i,w_i) \mid i \in I \}_F}{\gamma}{\rho_F}
    \quad\text{and}\quad
    \notsatisfy{\prod_{i \in I}^F\mo{M}_i}{\{(i,w_i) \mid i \in I \}_F}{\delta}{\rho_F},
  \end{equation*}
  where $\{ (i, w_i) \mid i \in I \}_F$ denotes the equivalence class
  of the function $I \to \bigcup_i W_i : i \mapsto w_i$.
  Note that $\rho_i$ does not depend on $\gamma$ or $\delta$.
  Therefore $\{(i,w_i) \mid i \in I \}_F$ and $\rho_F$ separate the consecution
  $\consec{\Gamma}{\Delta}$ in $\prod_{i \in I}^F\mo{M}_i$.
\end{proof}

\begin{corollary}\label{cor.compactness}
  If $\Gamma \models \Delta$, then there exist finite subsets
  $\Gamma\finset \subseteq \Gamma$ and $\Delta\finset \subseteq \Delta$
  such that $\Gamma\finset\models \Delta\finset$.
\end{corollary}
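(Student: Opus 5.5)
We argue by contraposition, using Theorem~\ref{compactness}. Suppose that for every pair of finite subsets $\Gamma\finset \subseteq \Gamma$ and $\Delta\finset \subseteq \Delta$ we have $\Gamma\finset \not\models \Delta\finset$. We will show that $\Gamma \not\models \Delta$, which is the contrapositive of the corollary.

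Since $\Gamma\cup\Delta$ may contain infinitely many free variables, we first reduce to a setting where Theorem~\ref{compactness} applies. That theorem is stated for a consecution $\consec{\Gamma}{\Delta}(x_1,\ldots,x_n)$. However, its proof uses neither the bound on the free variables nor the hypothesis $\rho_F(x_j)\in D_{\alpha_F}$ in any essential way beyond invoking Theorem~\ref{thm:Los}. If needed, we replace the free variables by fresh constant symbols; this is harmless because the signature is arbitrary. Alternatively, we may note that only finitely many variables occur in each formula. In that case Theorem~\ref{thm:Los} is applied formula by formula, provided the relevant values $\rho_F(x)$ lie in the domain at the chosen world.

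Now let $I$ be the set of finite subconsecutions of $\consec{\Gamma}{\Delta}$. For each $i=\consec{\Gamma\finset}{\Delta\finset}\in I$, the assumption $\Gamma\finset\not\models\Delta\finset$ gives an $\IL$-structure $\mo{M}_i$, a world $w_i$ and an assignment $\rho_i$ with $\satisfy{\mo{M}_i}{w_i}{i}{\rho_i}$. Choosing these witnesses for all $i$ at once requires the axiom of choice. Theorem~\ref{compactness} then yields an ultrafilter $F$ such that the world $\{(i,w_i)\mid i\in I\}_F$ and the assignment $\rho_F$ separate $\consec{\Gamma}{\Delta}$ in $\prod_{i\in I}^F\mo{M}_i$. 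By Proposition~\ref{prop:filter-product} this product is an $\IL$-structure, so $\Gamma\not\models\Delta$.

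The main obstacle is the domain side condition in Theorem~\ref{thm:Los}: the free variables must be evaluated inside $D_{\alpha_F}$. To handle this, we may choose each $\rho_i$ so that it sends every free variable to an element of $D_{i,w_i}$. This choice is possible because satisfaction depends only on the free variables. We also need that atoms with values outside the local domain are false, and that persistence holds. With these choices, $\rho_F(x)\in D_{\alpha_F}$ holds by the definition of $D_{\alpha_F}$.

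A second point concerns the edge cases $\Gamma=\emptyset$ or $\Delta=\emptyset$. Here the sets $\overline{(\gamma,\delta)}$ used in the proof of Theorem~\ref{compactness} are not defined. In these cases we instead use sets of the form $\{i \mid \gamma\in\Gamma\finset\}$ or $\{i\mid\delta\in\Delta\finset\}$. If both $\Gamma$ and $\Delta$ are empty, the corollary is trivial, since then $\Gamma$ and $\Delta$ are themselves finite.
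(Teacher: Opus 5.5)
Your core argument is the paper's: assuming every finite subconsecution is separable, you pick witnesses $\mo{M}_i,w_i,\rho_i$ indexed by the finite subconsecutions and apply Theorem~\ref{compactness} to get a separating world in the ultraproduct. Your remark that the bound on free variables plays no role is right. Your point about $\Gamma=\emptyset$ or $\Delta=\emptyset$ also catches a real slip in the paper's proof of Theorem~\ref{compactness}: there the family $E$ is empty and places no constraint on $F$. Using the sets $\{i\mid\gamma\in\Gamma\finset\}$ and $\{i\mid\delta\in\Delta\finset\}$ instead repairs this.

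However, your handling of what you call the main obstacle rests on a false claim. You cannot in general choose $\rho_i$ with $\rho_i(x)\in D_{i,w_i}$ for the free variables. Consider $\consec{\{\forall y\,P(y)\}}{\{P(x)\}}$. It is separable: take two incomparable worlds $w,v$ with $D_w=\II_w(P)=\{a\}$, $D_v=\{b\}$, $\II_v(P)=\emptyset$, and $\rho(x)=b$. Yet any $w,\rho$ separating it must have $\rho(x)\notin D_w$. Otherwise $\forall y\,P(y)$ at $w$ forces $\rho(x)\in\II_w(P)$, so $P(x)$ would hold at $w$. None of the facts you cite (dependence on free variables only, falsity of atoms outside the local domain, persistence) changes this. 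In the ultraproduct the separating world then necessarily has $\rho_F(x)\notin D_{\alpha_F}$, so the hypothesis of Theorem~\ref{thm:Los} genuinely fails at the point where Theorem~\ref{compactness} invokes it.

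The correct resolution is that this hypothesis is superfluous. In the appendix proof of Theorem~\ref{thm:Los} it is never used: the atomic clause holds for arbitrary tuples from $D$ by the very definition of $\II_{\alpha_F}(P)$ together with Lemma~\ref{lem:B3}(1), and no inductive clause appeals to it. Alternatively, your own first suggestion of replacing free variables by fresh constants turns everything into sentences, for which that hypothesis is vacuous. With that paragraph replaced, your proof is correct and coincides with the paper's.
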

\begin{proof}
  Let $\Gamma \models \Delta$ and suppose towards a contradiction that for all
  finite $\Gamma\finset \subseteq \Gamma$ and $\Delta\finset \subseteq \Delta$,
  we have $\Gamma\finset \not\models \Delta\finset$.
  Let $I$ be the set of finite subconsecutions of $\consec{\Gamma}{\Delta}$.
  Then for every $i \in I$ there exists an $\IL$-structure
  $\mo{M}_i = (W_i, \leq_i, \{ \mo{C}_{i,w} \}_{w \in W_i})$, a world $w_i \in W_i$ and an assignment $\rho_i$
  such that $\satisfy{\mo{M}_i}{w_i}{i}{\rho_i}$.
  Therefore Theorem~\ref{compactness} implies that there exists an
  ultrafilter $F$ over $I$ such that $\satisfy{\prod_{i \in I}^F \mo{M}_i}{\{(i,w_i) \mid i \in I\}_F}{\consec{\Gamma}{\Delta}}{\rho_F}$,
  where $\rho_F$ is the product assignment.
  As a consequence $\Gamma \not\models \Delta$, a contradiction.
\end{proof}

\section{Countable saturation in the intuitionistic setting}\label{sec:saturation}

  A final ingredient we need in the proof of the characterisation theorem for
  $\IK$ is a way to turn an $\IL$-structure into a modally saturated one.
  More precisely, we wish to elementarily embed any $\IL$-structure
  $\mo{M}$ into an $\IL$-structure $\mo{M}^*$ whose induced birelational
  model $\B_{\mo{M}^*}$ is modally saturated in the sense of
  Definition~\ref{def:mod-sat}.
  Keeping in sync with the classical case, we do so using an intuitionistic
  variation of the notion of $\omega$-saturation which is tailored towards
  our needs as an intermediary between $\IL$-structures and modally saturated
  birelational models.

\begin{definition}
  Given an $\IL$-structure $\mo{M} = (W, \leq, \{ \mo{C}_w \}_{w \in W})$
  and $A\subseteq\bigcup_{w\in W}D_w$,
  we write $\LL_A$ for the extension of $\LL$ with constants
  $\{ \und{a} \mid a \in A \}$. We define $(\mo{M})_A$ as the $\IL_A$-structure that extends the $\IL$-structure $\mo{M}$ by interpreting
  each of the new constants $\und{a}$ as $a$.
\end{definition}

\begin{definition}
  Let $\mo{M} = (W, \leq, \{ \mo{C}_w \}_{w \in W})$ be an $\IL$-structure.
  \begin{enumerate}
    \item A set $U \subseteq W$ is called \emph{positively $\omega$-saturated}
          if for all finite $A \subseteq \bigcap_{w \in U} D_w$
          and every set $\Gamma(x) \subseteq \LL_A$
          with one free variable $x$,
          if $\Gamma(x)$ is finitely satisfiable in $U$,
          then it is satisfiable in $U$.
    \item The set $U \subseteq W$ is called \emph{$\omega$-saturated}
          if for all finite $A \subseteq \bigcap_{w \in U} D_w$
          and all sets $\Gamma(x), \Delta(x) \subseteq \LL_A$
          with one free variable $x$,
          if $\consec{\Gamma(x)}{\Delta(x)}$ is finitely separable in $U$,
          then it is separable in $U$.
  \end{enumerate}
  The $\IL$-structure $\mo{M}$ is called \emph{$\omega$-saturated} if, for every $w \in W$, the set
  $\{ w \}$ is positively $\omega$-saturated
  and ${\uparrow}w$ is $\omega$-saturated.
\end{definition}

  We verify that every $\omega$-saturated $\IL$-structure gives rise to a modally saturated birelational model.

\begin{proposition}\label{prop:ind_sat}
  Let $\mo{M} = (W, \leq, \{ \mo{C}_w \}_{w \in W})$ be an
  $\omega$-saturated $\IL$-structure. Then the induced birelational model
  $\B_{\mo{M}} = (W^{\bullet}, \leq^{\bullet}, R^{\bullet}, V^{\bullet})$ is
  modally saturated.
\end{proposition}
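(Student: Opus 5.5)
We verify the three clauses of Definition~\ref{def:mod-sat} for an arbitrary world $(w,d)$ of $\B_{\mo{M}}$. The strategy is the same each time: translate modal formulas into first-order formulas with one free variable and a parameter. Then apply Lemma~\ref{lem:induced-birel} and the appropriate $\omega$-saturation property of $\mo{M}$ with the finite parameter set $A=\{d\}$.

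First we describe the three relevant sets of worlds in $\B_{\mo{M}}$:
\begin{align*}
R^\bullet[(w,d)] &= \{(w,e) \mid (d,e)\in\II_w(R)\},\\
{\uparrow}(w,d) &= \{(v,d) \mid w\le v\},\\
R^\bullet_\uparrow[(w,d)] &= \{(v,e) \mid w\le v,\ (d,e)\in\II_v(R)\}.
\end{align*}
For $\phi\in\ML$ we write $\st(x,\phi)$ for its standard translation. By Lemma~\ref{lem:induced-birel}, $(v,e)\Vdash\phi$ iff $\mo{M},v\Vdash^{[x:=e]}\st(x,\phi)$.

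For $R^\bullet[(w,d)]$, take $\Phi$ finitely satisfiable there and set $\Gamma(x)=\{R(\und d,x)\}\cup\st(x,\Phi)\subseteq\LL_{\{d\}}$. Finite satisfiability of $\Phi$ in $R^\bullet[(w,d)]$ means that $\Gamma(x)$ is finitely satisfiable in $\{w\}$; note that $d\in D_w$. Positive $\omega$-saturation of $\{w\}$ then gives some $e$ satisfying $\Gamma(x)$ at $w$. Since atomic truth at $w$ forces $e\in D_w$, the point $(w,e)$ lies in $R^\bullet[(w,d)]$ and satisfies $\Phi$.

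For ${\uparrow}(w,d)$ the individual is fixed, so we use a closed consecution. Take $\Gamma=\st(\und d,\Phi)$ and $\Delta=\st(\und d,\Psi)$, with a dummy free variable if needed, and $A=\{d\}\subseteq\bigcap_{v\ge w}D_v$, which holds by monotonicity of domains. Finite separability in ${\uparrow}(w,d)$ transfers to ${\uparrow}w$, and $\omega$-saturation of ${\uparrow}w$ yields a single separating world $v\ge w$; then $(v,d)$ separates $\consec{\Phi}{\Psi}$.

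For $R^\bullet_\uparrow[(w,d)]$ use $\Gamma(x)=\{R(\und d,x)\}\cup\st(x,\Phi)$ and $\Delta(x)=\st(x,\Psi)$ over ${\uparrow}w$, again with $A=\{d\}$. A separating pair $v\ge w$, $e$ gives $(d,e)\in\II_v(R)$ and hence $e\in D_v$, so $(v,e)\in R^\bullet_\uparrow[(w,d)]$ separates the modal consecution. Finite separability transfers in the same way.

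The main obstacle is bookkeeping rather than a real difficulty. The standard translation needs its variable and parameter handled carefully: substituting the constant $\und d$ for the free variable must be justified by the semantics in $(\mo{M})_A$, which follows from a routine substitution lemma. We must also check that $d$ lies in all the relevant domains, so that $A\subseteq\bigcap_{u\in U}D_u$, and that witnesses $e$ automatically belong to the current domain.
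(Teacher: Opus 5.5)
Your proof is correct. It follows the paper's overall plan: translate via $\st$, add the parameter $\und d$ where needed, and invoke positive $\omega$-saturation of $\{w\}$ or $\omega$-saturation of ${\uparrow}w$. For $R^\bullet[(w,d)]$ you spell out what the paper leaves to the reader. For $R^\bullet_\uparrow[(w,d)]$ your argument is the paper's, plus the observation, left implicit there, that $(d,e)\in\II_v(R)$ forces $e\in D_v$.

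The one real difference is the case ${\uparrow}(w,d)$, where your route is the more careful one. The paper works with $\consec{\st(x,\Phi)}{\st(x,\Psi)}$ with $x$ free and obtains from $\omega$-saturation a world $u\geq w$ separating it. It then concludes that $(u,d)$ separates $\consec{\Phi}{\Psi}$. But separability only supplies some assignment, which may send $x$ to an individual other than $d$, possibly even one outside $D_u$. Since the language has no equality, no extra formula can force $x$ to denote $d$. Replacing $x$ by the constant $\und d$, as you do, makes the consecution closed in $\LL_{\{d\}}$ with $\{d\}\subseteq\bigcap_{v\geq w}D_v$. Then whatever world $v\geq w$ the saturation property returns, $(v,d)$ is the required witness.

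The cost is the substitution lemma relating $\rho[x\coloneq d]$ to $\und d$. It does hold in this semantics, because constants and variables are both evaluated through $\rho^{\mo{M}}$, independently of the world.
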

\begin{proof}
  Let $(w, d) \in W^{\bullet}$ be an arbitrary world.
  One may check, similarly to~\cite[Theorem~2.65]{BlaRijVen01}, that $R^{\bullet}[(w, d)]$ is
  positively saturated. We verify the two remaining saturation conditions
  from Definition~\ref{def:mod-sat}.
  
  \medskip\noindent
  \textit{${\uparrow}(w,d)$ is saturated.}
    Let $\Phi, \Psi \subseteq \ML$ be sets of formulas that are finitely separable in ${\uparrow}(w,d)$. Consider the consecution $\consec{\st(x, \Phi)}{\st(x, \Psi)}$.
    Then each of its finite subconsecutions is of the form
    $\consec{\st(x, \Phi')}{\st(x, \Psi')}$ for some finite $\Phi' \subseteq \Phi$
    and $\Psi' \subseteq \Psi$.
    By assumption, we can find some $(v, d) \in {\uparrow}(w, d)$ that
    satisfies $\Phi'$ and refutes $\Psi'$, which implies
    $\satisfy{\mo{M}}{v}{\bigwedge\st(x,\Phi')}{[x \coloneq d]}$
    and $\notsatisfy{\mo{M}}{v}{\bigvee\st(x,\Psi')}{[x \coloneq d]}$.
    Therefore $\consec{\st(x, \Phi')}{\st(x, \Psi')}$ is finitely separable
    in ${\uparrow}w$, and since $\mo{M}$ is assumed to be $\omega$-saturated
    it must also be separable in some world $u \in {\uparrow}w$.
    This entails that $(u, d) \in {\uparrow}(w, d)$ separates $\consec{\Phi}{\Psi}$, as desired.

  \medskip\noindent
  \textit{$R^\bullet_{\uparrow}[(w,d)]$ is saturated.}
    Let $\Phi, \Psi \subseteq \ML$ and suppose that for any finite
    $\Phi' \subseteq \Phi$ and $\Psi' \subseteq \Psi$ there exists a world
    $(v, e) \in R^\bullet_{\uparrow}[(w, d)]$ 
    separating $\consec{\Phi'}{\Psi'}$.
    Note that $(v, e) \in R^{\bullet}_{\uparrow}[(w, d)]$ iff
    $(w, d) \leq^{\bullet} (v, d) R^{\bullet} (v, e)$,
    iff $w \leq v$ and $(d, e) \in \II_v(R)$.
    Define
    \begin{equation*}
      \Gamma \coloneq \{R\und{d}x\} \cup \st(x, \Phi)
      \quad\text{and}\quad \Delta \coloneq \st(x, \Psi).
    \end{equation*}
    We claim that $\consec{\Gamma}{\Delta}$ is finitely separable in
    ${\uparrow}w$ on the $\IL$-structure $\mo{M}$.
    To see this, let $\consec{\Gamma'}{\Delta'}$ be a finite subconsecution of $\consec{\Gamma}{\Delta}$,
    and let $\Phi' \coloneq \{ \phi \in \Phi \mid \st(x, \phi) \in \Gamma' \}$
    and $\Psi' \coloneq \{ \psi \in \Psi \mid \st(x, \psi) \in \Delta' \}$.
     Then $\Phi'$ and $\Psi'$ are finite, so by assumption we can find some
    $(v, e) \in R^\bullet_{\uparrow}[(w, d)]$ that separates $\consec{\Phi'}{\Psi'}$.
    This gives $\satisfy{\mo{M}}{v}{\consec{\Gamma'}{\Delta'}}{[x \coloneq e]}$, since $(v, e) \in R^\bullet_{\uparrow}[(w, d)]$ implies $\satisfy{\mo{M}}{v}{R\und{d}x}{[x\coloneq e]}$.
    As we have seen, $(v, e) \in R^\bullet_{\uparrow}[(w, d)]$
    implies $w \leq v$,
    so $\consec{\Gamma}{\Delta}$ is finitely separable in ${\uparrow}w$.
    Since $\mo{M}$ is $\omega$-saturated, we can find some $u \in {\uparrow}w$
    and $b \in D_u$ such that $\satisfy{\mo{M}}{u}{\consec{\Gamma}{\Delta}}{[x\coloneq b]}$.
    From this it follows that $(u, b) \in W^{\bullet}$ is an element of $R^\bullet_{\uparrow}[(w,d)]$ that separates $\consec{\Phi}{\Psi}$.
    We conclude that $R^\bullet_{\uparrow}[(w, d)]$ is saturated, as desired.
\end{proof}

  Recall that a countably incomplete ultrafilter on a set $I$ is an ultrafilter
  that is not closed under countably infinite
  intersections~\cite[Propositions~4.3.4 and~4.3.5]{ChaKei73}.
  Modifying~\cite[Theorem~6.1.1]{ChaKei73} we have:

\begin{proposition}\label{prop:ultra-sat}
  Let $\LL$ be a language over a countable signature, let $I$ be a set and $\mo{M}_i$ an $\IL$-structure for each $i \in I$.
  If $F$ is a countably incomplete ultrafilter on $I$,
  then $\prod_{i \in I}^F\mo{M}_i$ is $\omega$-saturated.
\end{proposition}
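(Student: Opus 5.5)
We follow the classical proof of \cite[Theorem~6.1.1]{ChaKei73}, with {\L}o{\'s}'s Theorem~\ref{thm:Los} doing the work. Write $\mo{M} = \prod_{i \in I}^F \mo{M}_i$. Since $F$ is countably incomplete, fix a decreasing chain $I = I_0 \supseteq I_1 \supseteq I_2 \supseteq \cdots$ of sets in $F$ with $\bigcap_n I_n = \emptyset$. For $i \in I$ let $n(i)$ be the largest $n$ with $i \in I_n$; this is finite. We treat the two clauses of $\omega$-saturation separately. The positive clause for $\{\alpha_F\}$ is the special case of the argument below in which $\Delta$ is empty, the set $U$ is the singleton, and no $\leq$-step is involved.

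\emph{Reductions.} Let $U = {\uparrow}\alpha_F$ and let $A = \{a^1,\ldots,a^k\} \subseteq \bigcap_{\beta_F \in U} D_{\beta_F}$ be finite. In particular $a^j \in D_{\alpha_F}$. Choose representatives $\xi^j$ with $\xi^j(i) \in D_{i,\alpha(i)}$ for every $i$; this can be arranged after changing the representatives on a null set. The new constants $\und{a^j}$ are then interpreted in each factor $\mo{M}_i$ as $\xi^j(i)$, and $(\mo{M})_A$ is the ultraproduct of the $(\mo{M}_i)_{A_i}$. Hence Theorem~\ref{thm:Los} applies to $\LL_A$ as well. The signature is countable, so $\Gamma(x)$ and $\Delta(x)$ are countable. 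Enumerate them as $\gamma_1,\gamma_2,\ldots$ and $\delta_1,\delta_2,\ldots$, padding with $\top \coloneq \bot\to\bot$ and with $\bot$ if they are finite.

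\emph{Key step: the finite witnesses transfer to the factors.} For each $n$, finite separability in $U$ gives some $\beta_F \geq \alpha_F$ and some $\zeta_F \in D_{\beta_F}$ such that
\[
\satisfy{\mo{M}}{\beta_F}{\textstyle\bigwedge_{m\le n}\gamma_m}{[x\coloneq\zeta_F]}
\quad\text{and}\quad
\notsatisfy{\mo{M}}{\beta_F}{\textstyle\bigvee_{m\le n}\delta_m}{[x\coloneq\zeta_F]}.
\]
We must express this as a single condition at $\alpha_F$, so that {\L}o{\'s} can be applied in the factors. The intuitionistic formula $\exists x(\ldots)$ is unsuitable, because the negative part is not expressible by a formula. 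Instead, apply Theorem~\ref{thm:Los} directly at $\beta_F$. This gives an $F$-large set $X_n$ of indices $i$ such that $\alpha(i) \leq_i \beta(i)$, $\zeta(i) \in D_{i,\beta(i)}$, and $\beta(i)$ separates $\consec{\gamma_1..\gamma_n}{\delta_1..\delta_n}$ under $x\coloneq\zeta(i)$. We use this set-theoretic statement in place of a formula.

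Set $Y_n = I_n \cap X_1 \cap \cdots \cap X_n \in F$, which is decreasing with empty intersection. For $i \in Y_1$ let $m(i) = \max\{n \mid i \in Y_n\}$. Choose $\beta(i)$ and $\zeta(i)$ to be the witnesses for level $m(i)$ at $i$. For $i \notin Y_1$, let $\beta(i) = \alpha(i)$ and let $\zeta(i)$ be arbitrary in $D_{i,\alpha(i)}$. Then $\{i \mid \alpha(i) \leq_i \beta(i)\} \supseteq Y_1 \in F$, so $\beta_F \in U$. Likewise $\zeta_F \in D_{\beta_F}$. For every $n$, each $i \in Y_n$ has $m(i) \geq n$, so $\beta(i)$ satisfies $\gamma_n$ and refutes $\delta_n$ at $\zeta(i)$. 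Theorem~\ref{thm:Los} then gives $\satisfy{\mo{M}}{\beta_F}{\consec{\Gamma}{\Delta}}{[x\coloneq\zeta_F]}$.

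\emph{Expected obstacle.} The delicate point is the choice of witnesses, which must be made coordinatewise as above. One must also check that {\L}o{\'s}'s side condition holds: the free variable and the constants must denote elements of $D_{\beta_F}$. Since $A \subseteq D_{\alpha_F} \subseteq D_{\beta_F}$, this is where the hypothesis $A \subseteq \bigcap_{w\in U} D_w$ is used. We also need persistence of domains along $\leq_i$ in each factor, as given in the definition of an $\IL$-structure.
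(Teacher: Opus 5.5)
Your proof is correct, but its key step takes a different route from the paper's. The paper does internalise the level-$n$ condition. Separability of the $n$-th finite subconsecution in ${\uparrow}\alpha_F$ amounts to $\alpha_F$ \emph{refuting} $\forall x((\gamma_1\wedge\cdots\wedge\gamma_n)\to(\delta_1\vee\cdots\vee\delta_n))$. For the singleton, the paper instead uses that $\alpha_F$ satisfies $\exists x(\gamma_1\wedge\cdots\wedge\gamma_n)$. An ultrafilter transfers refutation as well as satisfaction, so Theorem~\ref{thm:Los} at the fixed world $\alpha_F$ gives $F$-large sets of indices $i$ at which $\alpha(i)$ refutes this formula. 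These sets are automatically decreasing in $n$, and the witnesses $b_i\geq_i a_i\geq_i\alpha(i)$ and $\rho_i$ are then read off \emph{inside the factors} from the Kripke clauses for $\forall$ and $\to$. So your remark that the negative part is not expressible holds only for formulas \emph{satisfied} at $\alpha_F$, not for refuted ones. You instead fix representatives $\beta^n,\zeta^n$ of the ultraproduct witnesses, apply {\L}o{\'s} at each $\beta^n_F$, and splice the representatives coordinatewise. Because your sets $X_n$ come from unrelated choices, you must intersect them to obtain a decreasing chain, and you do this correctly. Your route buys uniformity: $\{\alpha_F\}$ and ${\uparrow}\alpha_F$ are handled by one argument. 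It also never needs a formula describing $U$, only that membership in $U$ and in the domain is decided on an $F$-large set of coordinates. So essentially the same argument would show $\omega$-saturation of any set of worlds of the form $\{\beta_F \mid \{i \mid \beta(i)\in U_i\}\in F\}$. The paper's route stays closer to the classical type-realisation proof and shows why ${\uparrow}w$ is the natural set for the two-sided notion. Your padding with $\bot\to\bot$ and $\bot$ also covers finite or empty $\Gamma(x),\Delta(x)$, which the paper's enumeration passes over.
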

\begin{proof}
\textit{Part 1: for any world $\alpha_F$ in $\prod_{i \in I}^F\mo{M}_i$,
        the set $\{ \alpha_F \}$ is positively $\omega$-saturated.}
  Let $\alpha_F$ be an element of the ultraproduct $\prod_{i \in I}^F \mo{M}_i$.
  Let $A \subseteq D_{\alpha_F}$ be a finite set of individuals of $\alpha_F$
  and let $\IL_A$ be the extension of the language with constants
  $\underline{a}$ for each $a_F \in A$.
  Let $A_i = \{ a(i) \mid a_F \in A \}$.
  Then we have
  \begin{equation*}
    \Big(\prod_{i \in I}^F\mo{M}_i\Big)_A = \prod_{i \in I}^F((\mo{M}_i)_{A_i}).
  \end{equation*}
  Let $\Gamma(x)=\{ \gamma_1(x), \gamma_2(x), \ldots \}$ be a
  subset of $\LL_A$ that is
  finitely satisfiable in $\{ \alpha_F \}$. 
  Since $\Gamma(x)$ is finitely satisfiable in $\{ \alpha_F \}$,
  for each $n \in \mathbb{N}$ we can find some $d \in D_{\alpha_F}$ such that
  $\satisfy{\prod_{i \in I}^F(\mo{M}_i)_{A_i}}
           {\alpha_F}
           {\gamma_1(x) \wedge \cdots \wedge \gamma_n(x)}
           {[x \coloneq d]}$.
  Hence for each $n \in \mathbb{N}$,
  \begin{equation*} 
    \satisfy{\prod_{i \in I}^F((\mo{M}_i)_{A_i})}{\alpha_F}{\exists x(\gamma_1(x)\land\cdots\land\gamma_n(x))}{}
  \end{equation*}
  
  Using the fact that $F$ is countably incomplete,
  we can find a descending chain
  $I = I_0 \supseteq I_1 \supseteq I_2 \supseteq \cdots$ of sets in $F$
  such that $\bigcap_{n \in \mathbb{N}} I_n = \emptyset$.
  Let $X_0 = I$, and for each $n \in \mathbb{N}_{>0}$ define
  \begin{equation*}
    X_n = I_n \cap \{i \in I \mid \satisfy{(\mo{M}_i)_{A_i}}{\alpha(i)}{\exists x(\gamma_1(x)\land \cdots \wedge \gamma_n(x))}{}\}
  \end{equation*}
  By \L{}o{\'s}'s Theorem, the right part of the intersection belongs
  to $F$ and hence $X_n \in F$.
  Moreover, we have $X_n \supseteq X_{n+1}$ for all $n \in \mathbb{N}$,
  and $\bigcap_{n \in\mathbb{N}} X_n = \emptyset$. 
  As a consequence, for each $i \in I$ there exists an $n(i) \in \mathbb{N}$
  such that $n(i)$ is the greatest natural number with $i \in X_{n(i)}$,
  so there exists an assignment $\rho_i$ for $\mo{M}_i$ such that
  \begin{equation*}
    \satisfy{(\mo{M}_i)_{A_i}}{\alpha(i)}{\gamma_1(x)\land\cdots\land\gamma_{n(i)}(x)}{\rho_i}
  \end{equation*}

  Now let $n \in \mathbb{N}_{>0}$ and $i \in X_n$. 
  Then $n \leq n(i)$ and hence
  $\satisfy{(\mo{M}_i)_{A_i}}{\alpha(i)}{\gamma_n(x)}{\rho_i}$. Since $X_n \in F$, we have 
  \begin{equation*}
    X_n \subseteq \{ i \in I \mid \satisfy{(\mo{M}_i)_{A_i}}{\alpha(i)}{\gamma_n(x)}{\rho_i} \}
        \in F
  \end{equation*}
  so, by \L{}o{\'s}'s Theorem~\ref{thm:Los}, we have
  $\satisfy{\prod_{i \in I}^F((\mo{M}_i)_{A_i})}{\alpha_F}{\gamma_n(x)}{\rho_F}$,
  where $\rho$ is the product assignment.
  Since this holds for any $n \in \mathbb{N}_{>0}$,
  it follows that $\Gamma(x)$ is satisfiable in $\{\alpha_F\}$.

\medskip\noindent
\textit{Part 2: for any world $\alpha_F$ in $\prod_{i \in I}^F\mo{M}_i$,
        the set ${\uparrow}\alpha_F$ is $\omega$-saturated.}
  Let $\alpha_F$ be an element of the ultraproduct $\prod_{i \in I}^F \mo{M}_i$.
  Let $A \subseteq \bigcap_{\beta_F \in {\uparrow}\alpha_F}D_{\beta_F} = D_{\alpha_F}$ be a finite set of individuals of $\alpha_F$
  and let $\IL_A$ be the extension of the language with constants
  $\underline{a}$ for each $a_F \in A$.
  Let $A_i = \{ a(i) \mid a_F \in A \}$.
  As before, we have $(\prod_{i \in I}^F\mo{M}_i)_A = \prod_{i \in I}^F((\mo{M}_i)_{A_i})$.

  Let $\Gamma(x),\Delta(x) \subseteq \LL_A$ be countable sets such that
  $\prod_{i \in I}^F((\mo{M}_i)_{A_i})$ finitely separates $\consec{\Gamma(x)}{\Delta(x)}$
  in the set ${\uparrow}\alpha_F$.
  Then we can write $\Gamma(x) = \{ \gamma_1(x), \gamma_2(x), \ldots \}$
  and $\Delta(x) = \{ \delta_1(x), \delta_2(x), \ldots \}$.
  By hypothesis, for each $n \in \mathbb{N}$ there exists an $\alpha_F \leq \beta^n_F$
  and an assignment $\rho_F^n$ such that
  \begin{equation*}
    \satisfy{\prod_{i \in I}^F((\mo{M}_i)_{A_i})}
            {\beta^n_F}
            {\consec{\{\gamma_1(x),\ldots,\gamma_n(x)\}}{\{\delta_1(x),\ldots,\delta_n(x)\}}}
            {\rho_F^n}
  \end{equation*}
  and hence, since $\alpha_F \leq \beta^n_F$,
  \begin{equation*}
    \notsatisfy{\prod_{i \in I}^F((\mo{M}_i)_{A_i})}
               {\alpha_F}
               {\forall x( (\gamma_1(x) \wedge \cdots \wedge \gamma_n(x)) \to (\delta_1(x) \vee \cdots \vee \delta_n(x)))}
               {}
  \end{equation*}

  Again, the fact that $F$ is countably incomplete yields an infinite
  descending chain $I = I_0 \supseteq I_1 \supseteq I_2 \supseteq \cdots$
  of elements in $F$ such that $\bigcap_{n \in \mathbb{N}} I_n = \emptyset$.
  Let $X_0 = I$, and for each $n \in \mathbb{N}_{>0}$ define
  \begin{equation*}
    X_n = I_n \cap \{i  \in I \mid \notsatisfy{(\mo{M}_i)_{A_i}}{\alpha(i)}{\forall x( (\gamma_1(x) \wedge \cdots \wedge \gamma_n(x)) \to (\delta_1(x) \vee \cdots \vee \delta_n(x)))}{}\}
  \end{equation*}
  By \L{}o{\'s}'s Theorem~\ref{thm:Los}, the right part of the intersection
  belongs to $F$ and hence $X_n \in F$. Moreover, $X_n \supseteq X_{n+1}$
  and $\bigcap_{n \in\mathbb{N}} X_n = \emptyset$. 
  Hence for each $i \in I$ there exists an $n(i) \in \mathbb{N}$ such that $n(i)$
  is the greatest natural number such that $i \in X_{n(i)}$, so that there exists an 
  assignment $\rho_i$ for $\mo{M}_i$ and some $a_i \geq \alpha(i)$ in $W_i$
  such that
  \begin{equation*}
    \notsatisfy{(\mo{M}_i)_{A_i}}{a_i}{(\gamma_1(x) \wedge \cdots \wedge \gamma_{n(i)}(x)) \to (\delta_1(x) \vee \cdots \vee \delta_{n(i)}(x))}{\rho_i}
  \end{equation*}
  Therefore, for every $i \in I$, there exists some $b_i \geq_i a_i$ such that 
  \begin{equation*}
    \satisfy{(\mo{M}_i)_{A_i}}{b_i}{\consec{\{\gamma_1(x),\ldots,\gamma_{n(i)}(x)\}}{\{\delta_1(x),\ldots,\delta_{n(i)}(x)\}}}{\rho_i}.
  \end{equation*}

  For each $n \in \mathbb{N}_{>0}$ and $i \in X_n$ we have $n \leq n(i)$, hence
  $\satisfy{(\mo{M}_i)_{A_i}}{b_i}{\consec{\{\gamma_n(x)\}}{\{\delta_n(x)\}}}{\rho_i}$.
  Since $X_n \in F$, 
  \begin{equation*}
    X_n \subseteq \{ i \in I \mid \satisfy{(\mo{M}_i)_{A_i}}{b_i}{\consec{\{\gamma_n(x)\}}{\{\delta_n(x)\}}}{\rho_i}\} \in F
  \end{equation*}
  so using \L{}o{\'s}'s Theorem~\ref{thm:Los} we obtain
  $\satisfy{\prod_{i \in I}^F((\mo{M}_i)_{A_i})}{\beta_F}{\consec{\{\gamma_n(x)\}}{\{\delta_n(x)\}}}{\rho_F}$, where $\rho$ is the product assignment and $\beta(i)=b_i$ for every $i \in I$.
  Since this holds for every $n \in \mathbb{N}_{>0}$,
  we may conclude that 
  $\satisfy{\prod_{i \in I}^F((\mo{M}_i)_{A_i})}{\beta_F}{\consec{\Gamma(x)}{\Delta(x)}}{\rho_F}$
  so that $\consec{\Gamma(x)}{\Delta(x)}$ is separable in ${\uparrow}\alpha_F$.
\end{proof}

\begin{theorem}\label{thm:ultra-embedding}
  Every $\IL$-structure for a countable signature can be elementarily embedded in an $\omega$-saturated $\IL$-structure.
\end{theorem}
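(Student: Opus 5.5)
The plan is to take an ultrapower of the given structure with respect to a countably incomplete ultrafilter, and then combine Proposition~\ref{prop:ultra-embedding} with Proposition~\ref{prop:ultra-sat}.

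First, fix a countably incomplete ultrafilter $F$ on $I \coloneq \mathbb{N}$. The standard choice is any non-principal ultrafilter on $\mathbb{N}$. It extends the Fréchet filter of cofinite sets, which exists by the ultrafilter theorem~\cite[Theorem~4.1.3]{ChaKei73}. It is countably incomplete because the sets $I_n \coloneq \{ m \in \mathbb{N} \mid m \geq n \}$ all belong to $F$, while $\bigcap_{n} I_n = \emptyset$. This is exactly the descending chain used in the proof of Proposition~\ref{prop:ultra-sat}.

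Second, given an $\IL$-structure $\mo{M}$ for a countable signature, form the ultrapower $\mo{M}^* \coloneq \prod_{i \in I}^F \mo{M}$. By Proposition~\ref{prop:filter-product}, $\mo{M}^*$ is an $\IL$-structure for the same signature. By Proposition~\ref{prop:ultra-embedding}, the natural embedding $(\epsilon, \eta)$, with $w \mapsto w^*$ and $d \mapsto d^*$, is an elementary embedding of $\mo{M}$ into $\mo{M}^*$.

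Third, apply Proposition~\ref{prop:ultra-sat} with $\mo{M}_i = \mo{M}$ for every $i \in I$. Since the signature is countable and $F$ is countably incomplete, $\mo{M}^*$ is $\omega$-saturated, which finishes the proof.

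There is one point to check carefully. The proof of Proposition~\ref{prop:ultra-sat} enumerates $\Gamma(x)$ and $\Delta(x)$ as countable sequences. This relies on $\LL_A$ being countable: the signature is countable and only finitely many constants $\und{a}$ are added, so this holds. Finite sets can be handled by repeating elements in the enumeration, and the empty set by using trivial formulas such as $\bot \to \bot$. This is presumably why the countable-signature hypothesis appears in the statement. Beyond that, the argument is a direct assembly of earlier results, and I do not expect a genuine obstacle.
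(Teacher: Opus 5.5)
Your proposal is correct and is essentially the paper's argument: the paper's proof just combines the existence of countably incomplete ultrafilters with Propositions~\ref{prop:ultra-sat} and~\ref{prop:ultra-embedding}, and your choice of a non-principal ultrafilter on $\mathbb{N}$ only makes that existence step explicit. One small correction to your side remark on padding: an empty $\Delta(x)$ should be padded with $\bot$, which is always refuted, not with $\bot \to \bot$, which can never be refuted; $\bot \to \bot$ is the right padding only for an empty $\Gamma(x)$.
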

\begin{proof}
  Combine the fact that countably incomplete ultrafilters exist (see Proposition~4.3.5 of~\cite{ChaKei73})
  with Propositions~\ref{prop:ultra-sat} and~\ref{prop:ultra-embedding}.
\end{proof}

\section{Van Benthem-style characterisation theorem}
\label{sec:vbtheorem}

  By now we have developed all ingredients needed to prove the characterisation
  theorem for $\IK$. 
  Throughout this section we use the same first-order signature as in Section~\ref{sec:ik}.

\begin{definition}
  A formula $\phi(x) \in \LL$ with one free variable $x$ is said to be
  \emph{invariant under IK-bi\-si\-mu\-la\-tions} if for every pair of $\IL$-structures
  $\mo{M} = (W, \leq, \{ \mo{C}_w \}_{w \in W})$ and
  $\mo{M}' = (W', \leq', \{ \mo{C}_{w'} \}_{w' \in W'})$,
  worlds $w \in W$ and $w'\in W'$, and individuals $d \in D_w$ and $d'\in D_{w'}$, if there exists an IK-bi\-si\-mu\-la\-tion $B$ between $\B_{\mo{M}}$ and~$\B_{\mo{M}'}$
  linking $(w, d)$ and $(w', d')$, then
  \begin{equation*}
    \satisfy{\mo{M}}{w}{\phi(x)}{[x \coloneq d]}
      \iff \satisfy{\mo{M}'}{w'}{\phi(x)}{[x \coloneq d']}.
  \end{equation*}
\end{definition}

\begin{theorem}
  A formula $\alpha(x) \in \LL$ with one free variable $x$ is equivalent to
  the translation of a formula in $\ML$ if and only if it is invariant under
  IK-bi\-si\-mu\-la\-tions.
\end{theorem}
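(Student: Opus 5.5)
The left-to-right direction is immediate: if $\alpha(x)$ is logically equivalent to $\st(x,\phi)$ for some $\phi \in \ML$, and $(w,d)$ and $(w',d')$ are linked by an IK-bisimulation between $\B_{\mo{M}}$ and $\B_{\mo{M}'}$, then Proposition~\ref{prop:adeq} gives $(w,d) \logeq (w',d')$. By Lemma~\ref{lem:induced-birel} this means $\satisfy{\mo{M}}{w}{\st(x,\phi)}{[x\coloneq d]}$ iff $\satisfy{\mo{M}'}{w'}{\st(x,\phi)}{[x\coloneq d']}$, and the same then holds for $\alpha(x)$.

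For the converse, assume $\alpha(x)$ is invariant under IK-bisimulations. Let $\mathrm{MOC}(\alpha) \coloneq \{ \st(x,\phi) \mid \phi \in \ML,\ \{\alpha(x)\} \models \{\st(x,\phi)\} \}$ be its modal consequences. The plan is to show $\mathrm{MOC}(\alpha) \models \{\alpha(x)\}$. Once we have this, Corollary~\ref{cor.compactness} yields a finite $\Gamma\finset \subseteq \mathrm{MOC}(\alpha)$ with $\Gamma\finset \models \{\alpha(x)\}$; the empty right-hand side is excluded as long as $\alpha$ is satisfiable (otherwise $\alpha \equiv \st(x,\bot)$). Since $\mathrm{MOC}$ is closed under finite conjunction, $\alpha$ is then equivalent to $\st(x, \bigwedge \phi_j)$.

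To show $\mathrm{MOC}(\alpha) \models \alpha$, take $\mo{M}, w, d$ with $\satisfy{\mo{M}}{w}{\mathrm{MOC}(\alpha)}{[x\coloneq d]}$. Let $T = \{\st(x,\phi) \mid (w,d) \Vdash \phi\}$ and $N = \{\st(x,\psi) \mid (w,d) \not\Vdash \psi\}$. We claim $\consec{T \cup \{\alpha\}}{N}$ is separable. If it were not, compactness (Corollary~\ref{cor.compactness}) would give finite $T' \subseteq T$ and $N' \subseteq N$ with $\{\alpha\}\cup T' \models N'$. Then $\alpha \models \st(x, \bigwedge T' \to \bigvee N')$: at any $v' \geq$ the evaluation world, persistence carries $\alpha$ upward, and wherever $\bigwedge T'$ holds some member of $N'$ must hold. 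Hence $\bigwedge T' \to \bigvee N'$ lies in $\mathrm{MOC}(\alpha)$ and is true at $(w,d)$. But at $(w,d)$ the formula $\bigwedge T'$ holds and $\bigvee N'$ fails, a contradiction. Note that $T$ and $N$ are countable since the signature is countable. So there are $\mo{N}, v, e$ satisfying $\alpha$, all of $T$, and refuting all of $N$; that is, $(v,e) \logeq (w,d)$ and $\satisfy{\mo{N}}{v}{\alpha}{[x\coloneq e]}$.

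Now we pass to saturated models. By Theorem~\ref{thm:ultra-embedding}, embed $\mo{M}$ and $\mo{N}$ elementarily into $\omega$-saturated $\mo{M}^*$ and $\mo{N}^*$. Elementarity preserves both $\alpha$ and all standard translations, so $(v^*, e^*) \logeq (w^*, d^*)$ in the induced birelational models, $\alpha$ holds at $(v^*,e^*)$, and $\alpha$ holds at $(w^*,d^*)$ iff it holds at $(w,d)$. By Proposition~\ref{prop:ind_sat}, $\B_{\mo{M}^*}$ and $\B_{\mo{N}^*}$ are modally saturated, so Theorem~\ref{thm:hm} gives an IK-bisimulation linking them. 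Invariance then transfers $\alpha$ to $(w^*,d^*)$, and hence to $(w,d)$.

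The main obstacle is the separability step, because modal equivalence must be encoded with both positive and negative information, and intuitionistic compactness has to be handled at the level of consecutions. In particular, the step turning $\{\alpha\}\cup T' \models N'$ into $\alpha \models \st(x,\bigwedge T' \to \bigvee N')$ needs care: it must use persistence (Lemma~\ref{mon.ifol}) and the fact that the $\to$-clause quantifies over all successor worlds.
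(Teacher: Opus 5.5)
Your proposal is correct and follows the paper's proof essentially step for step. Both arguments use $\MOC(\alpha)$ and compactness to obtain a model $\mo{N}$ separating $\consec{T\cup\{\alpha\}}{N}$, pass to $\omega$-saturated ultrapowers via the natural elementary embedding, and then apply Proposition~\ref{prop:ind_sat} and Theorem~\ref{thm:hm} to transfer $\alpha$ by invariance. Your extra care with the possibly empty right-hand side and with persistence in deriving $\alpha \models \st(x,\bigwedge T'\to\bigvee N')$ only makes explicit details the paper leaves implicit.
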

\begin{proof}
  Suppose $\alpha(x)$ is equivalent to $\st(x,\psi)$ for some $\psi \in \ML$.
  Let $\mo{M}$ and $\mo{M}'$ be two $\IL$-structures and
  $\Z$ be an IK-bi\-si\-mu\-la\-tion between $\B_{\mo{M}}$ and $\B_{\mo{M}'}$ linking
  $(w, d)$ to $(w', d')$. Then
  \begin{align*}
    \satisfy{\mo{M}}{w}{\alpha(x)}{[x \coloneq d]}
      &\iff \satisfy{\mo{M}}{w}{\st(x,\psi)}{[x \coloneq d]} \\ 
      &\iff \msatisfy{\B_{\mo{M}}}{(w,d)}{\psi}
      &\text{(by definition and Lemma~\ref{lem:induced-birel})} \\
      &\iff \msatisfy{\B_{\mo{M'}}}{(w',d')}{\psi}
      &\text{(Proposition~\ref{prop:adeq})} \\
      &\iff \satisfy{\mo{M}'}{w'}{\st(x,\psi)}{[x \coloneq d']} 
      &\text{(by definition and Lemma~\ref{lem:induced-birel})} \\
      &\iff  \satisfy{\mo{M}'}{w'}{\alpha(x)}{[x \coloneq d']}
  \end{align*}
  So, $\alpha$ is invariant under IK-bi\-si\-mu\-la\-tions.

  For the converse, assume that $\alpha(x)$ is invariant under IK-bi\-si\-mu\-la\-tions.
  Consider the set
  \begin{equation*}
    \MOC(\alpha) = \{ \st(x,\phi) \mid \phi \in \ML
                     \text{ and } \alpha(x) \models \st(x, \phi) \}.
  \end{equation*}
  It suffices to show $\MOC(\alpha) \models \alpha(x)$, because then
  compactness (Corollary~\ref{cor.compactness}) yields a finite
  subset $\{ \st(x, \psi_1), \ldots, \st(x, \psi_n) \} \subseteq \MOC(\alpha)$
  that has $\alpha(x)$ as a consequence, from which it follows that $\alpha(x)$ is
  equivalent to $\st(x, \psi_1 \wedge \cdots \wedge \psi_n)$.
  
  Assume $\satisfy{\mo{M}}{w}{\MOC(\alpha)}{[x \coloneq d]}$.
  We need to show that $\satisfy{\mo{M}}{w}{\alpha(x)}{[x \coloneq d]}$.
  Let
  \begin{equation*}
    \Gamma(x) \coloneq \{ \st(x,\phi) \mid \satisfy{\mo{M}}{w}{\st(x,\phi)}{[x \coloneq d]} \}
    \quad\text{and}\quad
    \Delta(x) \coloneq \{ \st(x,\psi) \mid \notsatisfy{\mo{M}}{w}{\st(x,\psi)}{[x \coloneq d]} \}.
  \end{equation*}
  Suppose towards a contradiction that
  $\Gamma(x) \cup \{ \alpha(x) \} \models \Delta(x)$.
  Then Corollary~\ref{cor.compactness} yields finite
  $\Gamma\finset(x) \subseteq \Gamma(x)$
  and $\Delta\finset(x) \subseteq \Delta(x)$ such that
  $\Gamma\finset(x) \cup \{ \alpha(x) \} \models \Delta\finset(x)$.
  This implies $\alpha(x) \models \bigwedge \Gamma\finset(x) \to \bigvee \Delta\finset(x)$,
  hence $\bigwedge \Gamma\finset(x) \to \bigvee \Delta\finset(x) \in \MOC(\alpha)$.
  By assumption, $\satisfy{\mo{M}}{w}{\MOC(\alpha)}{[x \coloneq d]}$,
  so $\satisfy{\mo{M}}{w}{\bigwedge \Gamma\finset(x) \to \bigvee \Delta\finset(x)}{[x \coloneq d]}$.
  But then the fact that $\satisfy{\mo{M}}{w}{\st(x,\psi)}{[x\coloneq d]}$ for all
  $\st(x, \psi) \in \Gamma\finset(x)$ contradicts the fact that
  $\notsatisfy{\mo{M}}{w}{\st(x, \chi)}{[x \coloneq d]}$ for all
  $\st(x, \chi) \in \Delta\finset(x)$.

  So we have $\Gamma(x) \cup \{ \alpha(x) \} \not\models \Delta(x)$,
  hence there must exist an $\IL$-structure $\mo{N} = (V, \leq, \{ \mo{C}_v \}_{v \in V})$,
  a world $v \in V$ and an individual $e \in D_v$ such that 
  $\mo{N}, v$ separates $\consec{\Gamma(x)\cup\{\alpha(x)\}}{\Delta(x)}$ under $[x \coloneq e]$.
  Then by construction we have
  $\satisfy{\mo{M}}{w}{\st(x,\varphi)}{[x \coloneq d]}$ if and only if
  $\satisfy{\mo{N}}{v}{\st(x,\varphi)}{[x \coloneq e]}$ for all $\phi \in \ML$.
  Proposition~\ref{prop:ultra-embedding}
  gives elementary embeddings of $\mo{M}$ and $\mo{N}$ into
  $\omega$-saturated $\IL$-structures $\mo{M}^*$ and $\mo{N}^*$,
  respectively.
  Hence, for every $\phi \in \ML$
  \begin{align*}
    \satisfy{\mo{M}^*}{w^*}{\st(x, \phi)}{[x \coloneq d^*]}
      &\iff \satisfy{\mo{M}}{w}{\st(x, \phi)}{[x \coloneq d]} \\
      &\iff \satisfy{\mo{N}}{v}{\st(x, \phi)}{[x \coloneq e]}
      \iff \satisfy{\mo{N}^*}{v^*}{\st(x,\varphi)}{[x \coloneq e^*]}.
  \end{align*}
  By Proposition~\ref{prop:ind_sat},
  $\B_{\mo{M}^*}$ and $\B_{\mo{N}^*}$ are modally saturated birelational models,
  so that Lemma~\ref{lem:induced-birel} entails
  \begin{equation*}
    \msatisfy{\B_{\mo{M}^*}}{(w^*, d^*)}{\phi}
      \iff \msatisfy{\B_{\mo{N}^*}}{(v^*, e^*)}{\phi}
  \end{equation*}
  for all $\phi \in \ML$. Hence, by Theorem~\ref{thm:hm}
  there exists an IK-bi\-si\-mu\-la\-tion $\Z$ between $\B_{\mo{M}^*}$ and $\B_{\mo{N}^*}$
  linking $(w^*, d^*)$ and $(v^*, e^*)$.
  Since $\alpha$ is invariant under IK-bi\-si\-mu\-la\-tions,
  we conclude that 
  \begin{equation*}
    \satisfy{\mo{M}}{w}{\alpha(x)}{[x \coloneq d]}
      \iff \satisfy{\mo{M}^*}{w^*}{\alpha(x)}{[x \coloneq d^*]}
      \iff \satisfy{\mo{N}^*}{v^*}{\alpha(x)}{[x \coloneq e^*]}
      \iff \satisfy{\mo{N}}{v}{\alpha(x)}{[x \coloneq e]}.
  \end{equation*}
  By construction the right-hand side holds, so that ultimately
  $\satisfy{\mo{M}}{w}{\alpha(x)}{[x \coloneq d]}$, as desired.
\end{proof}

\section{Conclusions}

Motivated by Van Benthem's celebrated characterisation theorem, we introduced a precise notion of IK-bi\-si\-mu\-la\-tion between birelational models and proved that the modal logic $\IK$ corresponds exactly to the IK-bi\-si\-mu\-la\-tion-invariant fragment of the intuitionistic first-order logic with one binary predicate and a unary predicate for each propositional letter.
En route to this result, we developed intuitionistic counterparts of classical model theory machinery, including an intuitionistic version of \L{}o{\'s}'s Theorem, a compactness theorem and notions of elementary embeddings and countable saturation.

Within the model-theoretic framework, a natural next step is to generalise our results and constructions so as to allow for function symbols in the signature, as well as to weaken the restrictions made on the interpretation of terms, and to expand the theory of saturated models beyond the countable case. Regarding the modal logical framework, obtaining a characterisation theorem for IK relative to classical first-order logic may also be of interest, as well as to further explore the relation between Kripke bisimulations and IK-bi\-si\-mu\-la\-tions to check whether they yield the same invariance notion.

Finally, it would be worth investigating whether the notion of IK-bisimulation developed here is robust to changes in the chosen intuitionistic modal logic (e.g.~$\mathsf{CK}$~\cite{BelPaiRit01,Pai03,MenPai05}, $\mathsf{WK}$~\cite{Wij90,WijNer05}, $\mathsf{FS}$ \cite{WolterZakharyaschev1999}, or other constructive variants), or whether each non-classical variant of $\mathsf{K}$ needs its own tailored bisimulation.

\appendix
\section{The filter product construction}\label{app:filter-product}

  We elaborate on the definition of the filter product.
  Proposition~\ref{prop:filter-product} follows from Lemmas~\ref{lem:B1} to~\ref{lem:B3}.

  \bigskip\noindent
  \textbf{The setup}
  Let $I$ be a set, and for each $i \in I$ let
  $\mo{M}_i = (W_i, \leq_i, \{ \mo{C}_{i,w} \}_{w \in W_i})$ be an $\IL$-structure,
  where $\mo{C}_{i, w} = (D_{i, w}, \II_{i, w})$.
  Let $F$ be a filter on $I$.
  In what follows, we aim to define the \emph{filter product}
  $\prod_{i \in I}^F \mo{M}_i = (W, {\leq,} \{ \mo{C}_w \}_{w \in W})$.

  \bigskip\noindent
  \textbf{Step 1: defining $W$.}
    Let $W$ be the reduced product $\prod_{i \in I}^F W_i$. In other words, $W$ consists
    of elements in $\prod_{i \in I} W_i$ modulo the equivalence relation $\sim$ given by
    $\alpha \sim \beta \iff \{ i \in I \mid \alpha(i) = \beta(i) \} \in F$.
    Elements in~$W$ are denoted by $\alpha_F$.

  \bigskip\noindent
  \textbf{Step 2: defining $\leq$.}
    Define the relation $\leq$ on $W$ by
    $\alpha_F \leq \beta_F$ iff $\{ i \in I \mid \alpha(i) \leq_i \beta(i) \} \in F$.

    \begin{lemma}\label{lem:B1} \
      \begin{enumerate}
        \item The definition of $\leq$ does not depend on the choice of 
              representative of $\alpha_F$ and $\beta_F$.
        \item The relation $\leq$ defines a partial order on $W$.
      \end{enumerate}
    \end{lemma}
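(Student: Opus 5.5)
For part (1), I will take representatives $\alpha \sim \alpha'$ and $\beta \sim \beta'$, so that
\[
E_\alpha \coloneq \{ i \mid \alpha(i) = \alpha'(i) \} \in F
\quad\text{and}\quad
E_\beta \coloneq \{ i \mid \beta(i) = \beta'(i) \} \in F.
\]
Suppose $S \coloneq \{ i \mid \alpha(i) \leq_i \beta(i) \} \in F$. Then
\[
S \cap E_\alpha \cap E_\beta \subseteq \{ i \mid \alpha'(i) \leq_i \beta'(i) \}.
\]
The left-hand side lies in $F$ because filters are closed under finite intersections. The right-hand side then lies in $F$ because filters are upward closed. By symmetry the converse inclusion also holds, so the relation is independent of the chosen representatives.

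For part (2), I will check the three order axioms pointwise, each time using closure of $F$ under finite intersections and supersets.

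\emph{Reflexivity.} The set $\{ i \mid \alpha(i) \leq_i \alpha(i) \}$ equals $I$, and $I \in F$.

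\emph{Transitivity.} Suppose $\{ i \mid \alpha(i) \leq_i \beta(i) \} \in F$ and $\{ i \mid \beta(i) \leq_i \gamma(i) \} \in F$. Their intersection is in $F$. By transitivity of each $\leq_i$, this intersection is contained in $\{ i \mid \alpha(i) \leq_i \gamma(i) \}$, so that set is in $F$ too.

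\emph{Antisymmetry.} Suppose $\alpha_F \leq \beta_F$ and $\beta_F \leq \alpha_F$. The intersection of the two witnessing sets is in $F$. By antisymmetry of each $\leq_i$, it is contained in $\{ i \mid \alpha(i) = \beta(i) \}$. Hence $\alpha \sim \beta$, that is, $\alpha_F = \beta_F$.

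None of these steps is a real obstacle. The only point to watch is that antisymmetry must be stated on equivalence classes, which is exactly what the definition of $\sim$ gives. It is also worth noting that $F$ need not be an ultrafilter for any of this, since only the filter axioms are used.
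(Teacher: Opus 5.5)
Your proof is correct and follows the paper's argument essentially step for step: part (1) via the intersection of the two equality sets with the witnessing set, followed by upward closure of $F$; part (2) via the pointwise order axioms combined with closure of $F$ under finite intersections and supersets. Your remark that only the filter axioms are used (not ultrafilter-ness) also matches the paper, where this lemma is stated for an arbitrary filter $F$.
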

    \begin{proof}
      (1) \; Suppose $\alpha \sim \alpha'$ and $\beta \sim \beta'$.
      Then $\{ i \in I \mid \alpha(i) = \alpha'(i) \} \in F$ and
      $\{ i \in I \mid \beta(i) = \beta'(i) \} \in F$. Note that
      \begin{equation*}
        \{ i \in I \mid \alpha(i) = \alpha'(i) \}
          \cap \{ i \in I \mid \beta(i) = \beta'(i) \}
          \cap \{ i \in I \mid \alpha(i) \leq_i \beta(i) \}
          \subseteq \{ i \in I \mid \alpha'(i) \leq_i \beta'(i) \}.
      \end{equation*}
      Therefore, if $\{ i \in I \mid \alpha(i) \leq_i \beta(i) \} \in F$, then
      the fact that $F$ is a filter, hence closed under intersections
      and upwards closed, entails
      $\{ i \in I \mid \alpha'(i) \leq_i \beta'(i) \} \in F$.
      This proves that $\leq$ is well defined.

      (2) \; We need to verify that $\leq$ is reflexive,
      antisymmetric and transitive.
      Reflexivity follows from the fact that $\{ i \in I \mid \alpha(i) \leq_i \alpha(i) \} = I \in F$.
      For antisymmetry, suppose $\alpha_F \leq \beta_F$ and
      $\beta_F \leq \alpha_F$. Then
      \begin{equation*}
        \{ i \in I \mid \alpha(i) \leq_i \beta(i) \}
          \cap \{ i \in I \mid \beta(i) \leq_i \alpha(i) \}
          = \{ i \in I \mid \alpha(i) = \beta(i) \} \in F,
      \end{equation*}
      so $\alpha \sim \beta$, hence $\alpha_F = \beta_F$.
      Lastly, suppose $\alpha_F \leq \beta_F$ and $\beta_F \leq \gamma_F$.
      Then
      \begin{equation*}
        \{ i \in I \mid \alpha(i) \leq_i \beta(i) \}
          \cap \{ i \in I \mid \beta(i) \leq_i \gamma(i) \}
          \subseteq \{ i \in I \mid \alpha(i) \leq_i \gamma(i) \} \in F,
      \end{equation*}
      so $\alpha_F \leq \gamma_F$.
    \end{proof}

  \smallskip\noindent
  \textbf{Step 3: defining the domains $D_{\alpha_F}$.}
    For each $i \in I$, let $D_i \coloneq D_{\mo{M}_i} = \bigcup_{w \in W_i} D_{i,w}$ be the union
    of the domains $D_{i,w}$ of the $\CL$-structures $\mo{C}_{i,w}$.
    Define $D \coloneq \prod_{i \in I}^F D_i$ to be the reduced
    product of the $D_i$, that is, $D$ is equal to the product $\prod_{i \in I} D_i$ modulo the
    equivalence relation $\approx$ given by $\xi \approx \eta$ iff $\{ i \in I \mid \xi(i) = \eta(i) \} \in F$. Now we define the domain at $\alpha_F \in W$ by
    \begin{equation*}
      D_{\alpha_F} \coloneq \{ \xi_F \in D \mid \{ i \in I \mid \xi(i) \in D_{i,\alpha(i)} \} \in F \}.
    \end{equation*}

    \begin{lemma}\label{lem:D} \
      \begin{enumerate}
        \item The definition of $D_{\alpha_F}$ does not depend on the 
              choice of representative of $\alpha_F$ or $\xi_F$.
        \item If $\alpha_F \leq \beta_F$, then $D_{\alpha_F} \subseteq D_{\beta_F}$.
        \item $D = \bigcup_{\alpha_F \in W} D_{\alpha_F}$
      \end{enumerate}
    \end{lemma}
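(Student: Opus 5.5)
All three items reduce to the fact that $F$ is closed under finite intersections and supersets. The pattern is the one already used in the proof of Lemma~\ref{lem:B1}: intersect the relevant index sets in $F$ and note that the intersection is contained in the set we need.

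For item (1), suppose $\alpha \sim \alpha'$ and $\xi \approx \xi'$, and that $\{ i \in I \mid \xi(i) \in D_{i,\alpha(i)} \} \in F$. Then
\begin{equation*}
  \{ i \mid \alpha(i) = \alpha'(i) \} \cap \{ i \mid \xi(i) = \xi'(i) \} \cap \{ i \mid \xi(i) \in D_{i,\alpha(i)} \} \subseteq \{ i \mid \xi'(i) \in D_{i,\alpha'(i)} \}.
\end{equation*}
The left-hand side lies in $F$, so the right-hand side does too. This shows that membership of $\xi_F$ in $D_{\alpha_F}$ does not depend on representatives, so $D_{\alpha_F}$ is well defined.

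For item (2), let $\alpha_F \leq \beta_F$ and $\xi_F \in D_{\alpha_F}$. Condition~(1) of Definition~\ref{def:IL-structure} applies in each $\mo{M}_i$: from $\alpha(i) \leq_i \beta(i)$ we get $D_{i,\alpha(i)} \subseteq D_{i,\beta(i)}$. Hence
\begin{equation*}
  \{ i \mid \alpha(i) \leq_i \beta(i) \} \cap \{ i \mid \xi(i) \in D_{i,\alpha(i)} \} \subseteq \{ i \mid \xi(i) \in D_{i,\beta(i)} \},
\end{equation*}
and the left-hand side is in $F$. So $\xi_F \in D_{\beta_F}$.

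For item (3), the inclusion $\supseteq$ holds by definition, since each $D_{\alpha_F}$ is a subset of $D$. For $\subseteq$, take any $\xi_F \in D$ with representative $\xi \in \prod_i D_i$. For each $i$ we have $\xi(i) \in D_i = \bigcup_{w \in W_i} D_{i,w}$, so, using the axiom of choice, we can pick some $\alpha(i) \in W_i$ with $\xi(i) \in D_{i,\alpha(i)}$. Then $\{ i \mid \xi(i) \in D_{i,\alpha(i)} \} = I \in F$, so $\xi_F \in D_{\alpha_F}$.

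None of the steps is a real obstacle. Item~(3) is the only one that needs a construction, namely the choice of the world $\alpha(i)$ witnessing $\xi(i)$ in each factor. The only care needed elsewhere is to apply item~(1) implicitly, so that the set conditions used in (2) and (3) can be checked on arbitrary representatives.
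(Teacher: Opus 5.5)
Your proof is correct and follows the paper's argument essentially step for step. You use the same three-set intersection for item~(1), where the converse direction follows by symmetry of $\sim$ and $\approx$ and the paper calls it analogous; the same appeal to condition~(1) of Definition~\ref{def:IL-structure} in each factor for item~(2); and the same pointwise choice of a world $\alpha(i)$ witnessing $\xi(i) \in D_{i,\alpha(i)}$ for item~(3).
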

    \begin{proof}
    Suppose $\alpha \sim \alpha'$ and $\xi \approx \xi'$, so that
      $\{ i \in I \mid \alpha(i) = \alpha'(i) \} \in F$ and $\{ i \in I \mid \xi(i) = \xi'(i) \} \in F$.
      We need to show that $\{ i \in I \mid \xi(i) \in D_{i,\alpha(i)} \} \in F$
      if and only if $\{ i \in I \mid \xi'(i) \in D_{i,\alpha'(i)} \} \in F$.
      Suppose the former is in $F$. Then for any~$j$ in the intersection
      \begin{equation}\label{eq:D-well-def}
        \{ i \in I \mid \alpha(i) = \alpha'(i) \}
          \cap \{ i \in I \mid \xi(i) \in D_{i,\alpha(i)} \} \cap \{ i \in I \mid \xi(i) = \xi'(i) \}
      \end{equation}
      we have $\alpha(j) = \alpha'(j)$ and $\xi(j) = \xi'(j)$
      and $\xi(j) \in D_{j, \alpha(j)}$, which clearly entails
      $\xi'(j) \in D_{j, \alpha'(j)}$. Therefore the intersection of
      \eqref{eq:D-well-def} is contained in
      $\{ i \in I \mid \xi'(i) \in D_{i, \alpha'(i)}\}$.
      Since filters are upwards closed and closed under finite intersections,
      we find $\{ i \in I \mid \xi'(i) \in D_{i,\alpha'(i)}\} \in F$,
      as desired. The other direction of the ``iff'' is analogous.

      For the second item, suppose $\alpha_F \leq \beta_F$ and $\xi_F \in D_{\alpha_F}$.
      Then $\{ i \in I \mid \alpha(i) \leq_i \beta(i) \} \in F$
      and $\{ i \in I \mid \xi(i) \in D_{i,\alpha(i)} \} \in F$.
      Now we note that
      \begin{equation*}
        \{ i \in I \mid \alpha(i) \leq_i \beta(i) \}
          \cap \{ i \in I \mid \xi(i) \in D_{i,\alpha(i)} \}
          \subseteq \{ i \in I \mid \xi(i) \in D_{i,\beta(i)} \},
      \end{equation*}
      because $\alpha(i) \leq_i \beta(i)$ implies $D_{i,\alpha(i)} \subseteq D_{i,\beta(i)}$. 
      Since $F$ is a filter we find
      $\{ i \in I \mid \xi(i) \in D_{i,\beta(i)} \} \in F$,
      hence $\xi_F \in D_{\beta_F}$, so that $D_{\alpha_F} \subseteq D_{\beta_F}$.

      For the third item, $\bigcup_{\alpha_F \in W} D_{\alpha_F} \subseteq D$ follows from the corresponding definition. For the converse, take $\xi_F \in D$. Thus, $\xi: I \to \bigcup_{i \in I} D_i$ is such that $\xi(i) \in D_i=\bigcup_{w \in W_i} D_{i,w}$ for each $i \in I$ and hence, for each $i \in I$, there exists a $w_i \in W_i$ such that $\xi(i) \in D_{i,w_i}$. Define $\alpha_\xi : I \to \bigcup_{i \in I} W_i$ by $\alpha_\xi (i) = w_i \in W_i$. Thus,
      $$\{ i \in I \mid \xi(i) \in D_{i,\alpha_\xi(i)}\} = I$$
      and hence $\xi_F \in D_{{\alpha_\xi}_F}$.
    \end{proof}

  \smallskip\noindent
  \textbf{Step 4: defining the constants.}
  Let $c \in \Con$ be a constant symbol.
  For each $i \in I$, let $c_i$ be the interpretation of $c$
  in $D_i$, and define $\tilde{c} \in \prod_{i \in I} D_i$ by
  \begin{equation*}
    \tilde{c} : I \to \bigcup_{i \in I} D_i : i \mapsto c_i.
  \end{equation*}
  Then, if $\tilde{c}_F \in D_{\alpha_F}$, we let $c \in \Con_{\alpha_F}$ and
  define $\II_{\alpha_F}(c) \coloneq \tilde{c}_F$; otherwise, if $\tilde{c}_F \notin D_{\alpha_F}$, we let $c \notin \Con_{\alpha_F}$
  and we leave $\II_{\alpha_F}(c)$ undefined.
  The fact that $D_{\alpha_F}$ does not depend on the representative
  of $\alpha_F$ entails that this is well defined, and by definition it
  satisfies the requirements from Definition~\ref{def:IL-structure}.

  \bigskip\noindent
  \textbf{Step 5: defining the predicates.}
    Finally, we define the interpretations of the predicates
    on the domains $D_{\alpha_F}$.
    For an $n$-ary predicate $P \in \Pred$, set
    \begin{equation*}
      (\xi_F^1, \ldots, \xi_F^n) \in \II_{\alpha_F}(P)
        \iff \{ i \in I \mid (\xi^1(i), \ldots, \xi^n(i)) \in \II_{i, \alpha(i)}(P) \} \in F.
    \end{equation*}
    Then for each $\alpha_F \in W$ we get a $\CL$-structure
    $\mo{C}_{\alpha_F} = (D_{\alpha_F}, \II_{\alpha_F})$.

\begin{lemma}\label{lem:B3} \
  \begin{enumerate}
    \item $\II_{\alpha_F}(P) \subseteq D^n_{\alpha_F}$.
    \item The definition of $\II_{\alpha_F}(P)$ does not depend on the
          choice of $\alpha_F$ or any of the $\xi^j_F$.
    \item If $\alpha_F \leq \beta_F$, then
          $\II_{\alpha_F}(P) \subseteq \II_{\beta_F}(P)$.
  \end{enumerate}
\end{lemma}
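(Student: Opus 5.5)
All three items are routine filter arguments in the style of Lemmas~\ref{lem:B1} and~\ref{lem:D}. The common pattern is to intersect the relevant index sets in $F$, check that the intersection is contained in the target index set, and conclude by upward closure of $F$. Throughout, fix representatives $\alpha$ of $\alpha_F$ and $\xi^1,\ldots,\xi^n$ of $\xi^1_F,\ldots,\xi^n_F$.

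For item (1), suppose $(\xi^1_F,\ldots,\xi^n_F)\in\II_{\alpha_F}(P)$, so that
\[
X\coloneq\{ i\in I \mid (\xi^1(i),\ldots,\xi^n(i))\in\II_{i,\alpha(i)}(P)\}\in F .
\]
Since each $\mo{C}_{i,\alpha(i)}$ is a $\CL$-structure, we have $\II_{i,\alpha(i)}(P)\subseteq D_{i,\alpha(i)}^n$. Hence for every $j\le n$ the set $X$ is contained in $\{ i\mid \xi^j(i)\in D_{i,\alpha(i)}\}$, and this set therefore lies in $F$. By definition of $D_{\alpha_F}$ this gives $\xi^j_F\in D_{\alpha_F}$ for each $j$.

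For item (2), let $\alpha\sim\alpha'$ and $\xi^j\approx\xi'^j$ for $j=1,\ldots,n$. Intersect $X$ with $\{i\mid\alpha(i)=\alpha'(i)\}$ and with the $n$ sets $\{i\mid \xi^j(i)=\xi'^j(i)\}$. This is a finite intersection of members of $F$, so it belongs to $F$. It is contained in $\{ i\mid (\xi'^1(i),\ldots,\xi'^n(i))\in \II_{i,\alpha'(i)}(P)\}$, which is therefore in $F$. The converse direction is symmetric. The only point needing care is that we intersect $n+2$ sets, which is still finite, exactly as in~\eqref{eq:D-well-def}.

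For item (3), assume $\alpha_F\le\beta_F$ and $(\xi^1_F,\ldots,\xi^n_F)\in\II_{\alpha_F}(P)$. Intersect $X$ with $\{i\mid \alpha(i)\le_i\beta(i)\}$. On that intersection, condition~(1) of Definition~\ref{def:IL-structure} applied to each $\mo{M}_i$ gives $\II_{i,\alpha(i)}(P)\subseteq\II_{i,\beta(i)}(P)$. So the intersection lies inside $\{ i\mid (\xi^1(i),\ldots,\xi^n(i))\in\II_{i,\beta(i)}(P)\}$, which is then in $F$, and hence the tuple belongs to $\II_{\beta_F}(P)$.

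None of these steps is a real obstacle. The one place to be careful is item (1): the defining condition for $\II_{\alpha_F}(P)$ says nothing explicit about membership in $D_{\alpha_F}$, so that membership must be recovered from the componentwise inclusion $\II_{i,\alpha(i)}(P)\subseteq D_{i,\alpha(i)}^n$.
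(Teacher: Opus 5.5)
Your proposal is correct and follows essentially the same argument as the paper. Items (1) and (2) match the paper's filter-intersection proofs step for step. For item (3), which the paper only calls similar to Lemma~\ref{lem:D}, you give the intended argument: intersect with $\{ i \in I \mid \alpha(i) \leq_i \beta(i) \}$ and use the monotonicity $\II_{i,\alpha(i)}(P) \subseteq \II_{i,\beta(i)}(P)$ in each $\mo{M}_i$.
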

\begin{proof}
  (1) \; Suppose $(\xi_F^1, \ldots, \xi_F^n) \in \II_{\alpha_F}(P)$.
  We aim to show $\xi_F^j \in D_{\alpha_F}$ for each $j \in \{ 1, \ldots, n \}$.
  By assumption,
  $\{ i \in I \mid (\xi^1(i), \ldots, \xi^n(i)) \in \II_{i, \alpha(i)}(P) \} \in F$.
  Since $\II_{i, \alpha(i)}(P) \subseteq D^n_{i,\alpha(i)}$ for each $i \in I$,
  this implies
  \begin{equation*}
    \{ i \in I \mid (\xi^1(i), \ldots, \xi^n(i)) \in \II_{i, \alpha(i)}(P) \}
      \subseteq  \{ i \in I \mid \xi^j(i) \in D_{i,\alpha(i)}\},
  \end{equation*}
  so that $\{ i \in I \mid \xi^j(i) \in D_{i, \alpha(i)} \} \in F$ for each
  $j \in \{ 1, \ldots, n \}$. By definition, this implies $\xi^j_F \in D_{\alpha_F}$.

  (2) \; Suppose $\alpha \sim \beta$ and $\xi^j \approx \eta^j$ for
  all $j \in \{ 1, \ldots, n \}$.
  Then $\{ i \in I \mid \alpha(i) = \beta(i) \} \in F$ and
  $\{ i \in I \mid \xi^j(i) = \eta^j(i) \} \in F$ for all $j \in \{ 1, \ldots, n \}$.
  We need to show that
  \begin{equation*}
    \{i \in I \mid (\xi^1(i), \ldots, \xi^n(i)) \in \II_{i,\alpha(i)}(P) \} \in F
    \iff
    \{i \in I \mid (\eta^1(i), \ldots, \eta^n(i)) \in \II_{i,\beta(i)}(P) \} \in F.
  \end{equation*}
  The left-to-right direction follows from the fact that
  \begin{align*}
    \{i \in I &\mid (\xi^1(i), \ldots, \xi^n(i)) \in \II_{i,\alpha(i)}(P) \}
      \cap \{ i \in I \mid \alpha(i) = \beta(i) \} \\
      &\cap \{ i \in I \mid \xi^1(i) = \eta^1(i) \} 
      \cap \cdots \cap \{ i \in I \mid \xi^n(i) = \eta^n(i) \}
      \subseteq \{i \in I \mid (\eta^1(i), \ldots, \eta^n(i)) \in \II_{i,\beta(i)}(P) \}.
  \end{align*}
  The other direction can be proven analogously.
  (3) is similar to the proof of Lemma~\ref{lem:D}.
\end{proof}

  Suppose $\rho_i$ is an assignment for $\mo{M}_i$, for each $i \in I$.
  For each $x \in \Var$, define
  $\rho(x) : I \to \bigcup_{i \in I} D_i : i \mapsto \rho_i(x)$.
  Then the \emph{product assignment} $\rho_F$
  is given by letting $\rho_F(x)$ be the equivalence class of $\rho(x)$,
  i.e.~$\rho_F(x) \coloneq \rho(x)_F$.
  Conversely, every assignment $\rho_F$ for $\prod_{i \in I}^F\mo{M}_i$ can be obtained
  in this way from assignments $\rho_i$ given by $\rho_i(x) = \rho(x)(i)$.

\begin{theorem}
  Let $I$ be a set, and for each $i \in I$ let $\mo{M}_i$ be a
  first-order structure and $\rho_i$ an assignment for $\mo{M}_i$.
  Let $F$ be an ultrafilter on $I$,
  $\mo{M} \coloneq \prod_{i \in I}^F \mo{M}_i$ the filter product,
  and $\rho_F$ the product assignment for~$\mo{M}$.
  Then for any world $\alpha_F \in W$ and any formula $\phi(x_1, \ldots, x_n)$
  such that $\rho_F(x_1), \ldots, \rho_F(x_n) \in D_{\alpha_F}$,
  \begin{equation*}
    \satisfy{\mo{M}}{\alpha_F}{\phi}{\rho_F}
      \iff \{ i \in I \mid \satisfy{\mo{M}_i}{\alpha(i)}{\phi}{\rho_i} \} \in F.
  \end{equation*}
\end{theorem}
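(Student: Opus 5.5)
We prove the statement by induction on the structure of $\phi$, simultaneously for all worlds $\alpha_F$ and all families of assignments $(\rho_i)_{i\in I}$ with $\rho_F(x_1),\ldots,\rho_F(x_n)\in D_{\alpha_F}$ for the free variables of $\phi$. Throughout, write $\|\phi\|_\alpha \coloneq \{ i \in I \mid \satisfy{\mo{M}_i}{\alpha(i)}{\phi}{\rho_i}\}$, and let $J_\alpha \coloneq \{ i \mid \rho_i(x_j)\in D_{i,\alpha(i)}\text{ for all } j\}$. By the hypothesis and the definition of $D_{\alpha_F}$, $J_\alpha \in F$. This set is used repeatedly to discard indices where the free variables are not yet present.

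\textbf{Atomic case.} For $P(t_1,\ldots,t_k)$ each term is a variable or a constant. For a constant $c$ we use $\rho_F^{\mo{M}}(c)=\tilde c_F$. If $(\rho^{\mo{M}}_F(t_1),\ldots)\in\II_{\alpha_F}(P)$, this unfolds by Step 5 directly to membership of $\|P(\vec t)\|_\alpha$ in $F$. The definedness conditions on constants take care of themselves: if $\tilde c_F\notin D_{\alpha_F}$, then $\{i\mid c^{\mo{M}_i}\in D_{i,\alpha(i)}\}\notin F$, so both sides fail. Lemma~\ref{lem:B3} guarantees that this is well defined.

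\textbf{Propositional connectives.} The case $\bot$ is trivial. For $\wedge$ we use closure of $F$ under intersections. For $\vee$ we use the ultrafilter property: $X\cup Y\in F$ iff $X\in F$ or $Y\in F$.

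\textbf{Case $\exists x\,\psi$.} For ($\Rightarrow$), a witness $\xi_F\in D_{\alpha_F}$ yields, by the induction hypothesis applied to $\rho_i[x\coloneq\xi(i)]$, a set in $F$. Intersecting it with $\{i\mid \xi(i)\in D_{i,\alpha(i)}\}$ gives a subset of $\|\exists x\psi\|_\alpha$. For ($\Leftarrow$), on $X=\|\exists x\psi\|_\alpha\in F$ we choose witnesses $\xi(i)\in D_{i,\alpha(i)}$, and outside $X$ we choose arbitrary elements of $D_{i,\alpha(i)}$ (these domains are nonempty). Then $\xi_F\in D_{\alpha_F}$, and the induction hypothesis applies.

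\textbf{Case $\forall x\,\psi$.} For ($\Leftarrow$), take $\beta_F\ge\alpha_F$ and $\xi_F\in D_{\beta_F}$. The sets $\{i\mid\alpha(i)\le_i\beta(i)\}$, $\{i\mid\xi(i)\in D_{i,\beta(i)}\}$ and $\|\forall x\psi\|_\alpha$ are all in $F$. On their intersection, $\satisfy{\mo{M}_i}{\beta(i)}{\psi}{\rho_i[x\coloneq\xi(i)]}$ holds. The free variables lie in $D_{\beta_F}$ by Lemma~\ref{lem:D}, so the induction hypothesis gives the claim. For ($\Rightarrow$) we argue contrapositively. If $\|\forall x\psi\|_\alpha\notin F$, its complement $Y$ is in $F$. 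For $i\in Y$ we pick $b_i\ge_i\alpha(i)$ and $d_i\in D_{i,b_i}$ refuting $\psi$, and for $i\notin Y$ we set $b_i=\alpha(i)$ and pick an arbitrary $d_i$ in that domain. Then $\beta_F\ge\alpha_F$ and $\xi_F\in D_{\beta_F}$, and by the induction hypothesis together with the ultrafilter property, $\beta_F$ refutes $\psi$ under $\rho_F[x\coloneq\xi_F]$.

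\textbf{Case $\to$.} This is handled in the same way as $\forall$. For ($\Leftarrow$), move to $\beta_F\ge\alpha_F$ and intersect the relevant sets. For ($\Rightarrow$), contrapositively, choose for $i$ in the complement some $b_i\ge\alpha(i)$ satisfying the antecedent and refuting the consequent, and set $b_i=\alpha(i)$ elsewhere. Persistence (Lemma~\ref{mon.ifol}) and $J_\alpha$ ensure the free variables stay in the relevant domains.

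\textbf{Main obstacle.} The main work is the $\forall$ and $\to$ cases. These require building a new world $\beta_F\ge\alpha_F$ by choice on an $F$-large set, padding with $\alpha(i)$ elsewhere. They also require checking that the domain hypothesis of the induction is preserved when passing to $\beta_F$ and to extended assignments. The treatment of partially defined constants in the atomic case needs care, but it reduces to the definitions in Step 4.
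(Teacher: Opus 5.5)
Your proposal is correct and follows the paper's proof essentially step for step. Both argue by induction on $\phi$. The $\to$ and $\forall$ cases go contrapositively, choosing witnesses $b_i \geq_i \alpha(i)$ on an $F$-large set and padding with $\alpha(i)$ elsewhere, and the $\exists$ case chooses witnesses in $D_{i,\alpha(i)}$.

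Your intersection with $\{i \mid \xi(i)\in D_{i,\alpha(i)}\}$ in the $\exists$ ($\Rightarrow$) direction spells out a step the paper dismisses with ``clearly''; that inclusion actually needs this restriction. Also, in the $\to$ case it is domain monotonicity (Lemma~\ref{lem:D}(2)), not persistence, that keeps the free variables in $D_{\beta_F}$.
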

\begin{proof}[Proof of Theorem~\ref{thm:Los}]
  We use induction on the structure of $\phi$.

  \medskip\noindent
  \textit{$\triangleright\;$ Case for $\phi = P(t_1, \ldots, t_m)$.}
    Compute
    \begin{align*}
      \satisfy{\mo{M}}{\alpha_F}{P(t_1, \ldots, t_m)}{\rho_F}
        &\iff (\rho^{\mo{M}}_F(t_1), \ldots, \rho^{\mo{M}}_F(t_m)) \in \II_{\alpha_F}(P)
        &\text{(definition of $\satisfySym^{\rho_F}$)} \\
        &\iff \{ i \in I \mid (\rho^{\mo{M}}(t_1)(i), \ldots, \rho^{\mo{M}}(t_m)(i)) \in \II_{i, \alpha(i)} \} \in F 
        &\text{(definition of $\II_{i, \alpha(i)}$)} \\
        &\iff \{ i \in I \mid (\rho^{\mo{M}_i}_i(t_1), \ldots, \rho^{\mo{M}_i}_i(t_m)) \in \II_{i, \alpha(i)} \} \in F
        &\text{(definition of $\rho_F$)} \\
        &\iff \{ i \in I \mid \satisfy{\mo{M}_i}{\alpha(i)}{P(t_1, \ldots, t_m)}{\rho_i} \} \in F
        &\text{(definition of $\satisfySym^{\rho_i}$)}
    \end{align*}

  \medskip\noindent
  \textit{$\triangleright\;$ Case for $\phi = (\psi \wedge \chi)$.}
    If $\satisfy{\mo{M}}{\alpha_F}{\psi \wedge \chi}{\rho_F}$,
    then $\satisfy{\mo{M}}{\alpha_F}{\psi}{\rho_F}$ and
    $\satisfy{\mo{M}}{\alpha_F}{\chi}{\rho_F}$.
    By induction, $\{ i \in I \mid \satisfy{\mo{M}_i}{\alpha(i)}{\psi}{\rho_i} \} \in F$
    and $\{ i \in I \mid \satisfy{\mo{M}_i}{\alpha(i)}{\chi}{\rho_i} \} \in F$,
    and using the fact that $F$ is a filter we find
    \begin{equation*}
      \{ i \in I \mid \satisfy{\mo{M}_i}{\alpha(i)}{\psi}{\rho_i} \}
        \cap \{ i \in I \mid \satisfy{\mo{M}_i}{\alpha(i)}{\chi}{\rho_i} \}
        \subseteq \{ i \in I \mid \satisfy{\mo{M}_i}{\alpha(i)}{\psi \wedge \chi}{\rho_i} \} \in F.
    \end{equation*}
    
    Conversely, if $\{ i \in I \mid \satisfy{\mo{M}_i}{\alpha(i)}{\psi \wedge \chi}{\rho_i} \}\in F$,
    we may use the fact that $F$ is a filter to conclude that
    \begin{equation*}
      \{ i \in I \mid \satisfy{\mo{M}_i}{\alpha(i)}{\psi \wedge \chi}{\rho_i} \}
        \subseteq \{ i \in I \mid \satisfy{\mo{M}_i}{\alpha(i)}{\psi}{\rho_i} \} \in F,
    \end{equation*}
    and, similarly, we conclude that $\{ i \in I \mid \satisfy{\mo{M}_i}{\alpha(i)}{\chi}{\rho_i} \} \in F$.
    The induction hypothesis then entails $\satisfy{\mo{M}}{\alpha_F}{\psi}{\rho_F}$ and
    $\satisfy{\mo{M}}{\alpha_F}{\chi}{\rho_F}$, from which it follows that
    $\satisfy{\mo{M}}{\alpha_F}{\psi \wedge \chi}{\rho_F}$.

  \medskip\noindent
  \textit{$\triangleright\;$ Case for $\phi = (\psi \vee \chi)$.}
    This is similar to the previous case, but using the fact that $F$ is prime.

  \medskip\noindent
  \textit{$\triangleright\;$ Case for $\phi = (\psi \to \chi)$.}
    Suppose $\{ i \in I \mid \satisfy{\mo{M}_i}{\alpha(i)}{\psi \to \chi}{\rho_i} \} \in F$.
    We show that $\satisfy{\mo{M}}{\alpha_F}{\psi \to \chi}{\rho_F}$.
    To this end, let $\alpha_F \leq \beta_F$ and suppose
    $\satisfy{\mo{M}}{\beta_F}{\psi}{\rho_F}$.
    Then, using the induction hypothesis, we obtain
    \begin{equation*}
      \{ i \in I \mid \alpha(i) \leq_i \beta(i) \} \in F
      \quad\text{and}\quad
      \{ i \in I \mid \satisfy{\mo{M}_i}{\beta(i)}{\psi}{\rho_i} \} \in F.
    \end{equation*}
    Combining this with the assumption gives
    \begin{align*}
      \{ i \in I \mid \alpha(i) \leq_i \beta(i) \}
      &\cap \{ i \in I \mid \satisfy{\mo{M}_i}{\beta(i)}{\psi}{\rho_i} \} \\
      &\cap \{ i \in I \mid \satisfy{\mo{M}_i}{\alpha(i)}{\psi \to \chi}{\rho_i} \}
      \subseteq \{ i \in I \mid \satisfy{\mo{M}_i}{\beta(i)}{\chi}{\rho_i} \} \in F,
    \end{align*}
    so by the induction hypothesis again we obtain
    $\satisfy{\mo{M}}{\beta_F}{\chi}{\rho_F}$.
    Therefore $\satisfy{\mo{M}}{\alpha_F}{\psi \to \chi}{\rho_F}$.

    For the converse, suppose 
    \begin{align*}
      \{ i \in I \mid \satisfy{\mo{M}_i}{\alpha(i)}{\psi\to\chi}{\rho_i} \} \notin F
    \end{align*}
    Since $F$ is an ultrafilter, its complement is an element of $F$:
    \begin{align*}
      S \coloneq \{ i \in I \mid \notsatisfy{\mo{M}_i}{\alpha(i)}{\psi\to\chi}{\rho_i} \} \in F
    \end{align*}
    Hence, for every $i \in S$, there exists a $b_i \in W_i$ such that
    $\alpha(i) \leq_i b_i$ and $\satisfy{\mo{M}_i}{b_i}{\psi}{\rho_i}$
    and $\notsatisfy{\mo{M}_i}{b_i}{\chi}{\rho_i}$.
    Define $\beta \in \prod_{i \in I} W_i$ by
    \begin{align*}
      \beta(i)
        = \begin{cases}
            b_i &\text{if } i \in S \\
            \alpha(i) &\text{if } i \notin S
          \end{cases}
    \end{align*}
    Then $\beta_F \in W$ and $\alpha_F \leq_F \beta_F$
    because $S \subseteq \{ i \in I \mid \alpha(i) \leq_i \beta(i) \}$ and $S \in F$.
    Furthermore,
    $S \subseteq \{ i \in I \mid \satisfy{\mo{M}_i}{\beta(i)}{\psi}{\rho_i} \}$
    and $S \subseteq \{ i \in I \mid \notsatisfy{\mo{M}_i}{\beta(i)}{\chi}{\rho_i} \}$, 
    so
    \begin{equation*}
      \{ i \in I \mid \satisfy{\mo{M}_i}{\beta(i)}{\psi}{\rho_i} \} \in F
      \quad\text{and}\quad
      \{ i \in I \mid \notsatisfy{\mo{M}_i}{\beta(i)}{\chi}{\rho_i} \} \in F,
    \end{equation*}
    hence $\{ i \in I \mid \satisfy{\mo{M}_i}{\beta(i)}{\chi}{\rho_i} \} \notin F$.
    The induction hypothesis then yields $\satisfy{\mo{M}}{\beta_F}{\psi}{\rho_F}$
    and $\notsatisfy{\mo{M}}{\beta_F}{\chi}{\rho_F}$.
    Since $\alpha_F \leq_F \beta_F$, this proves
    $\notsatisfy{\mo{M}}{\alpha_F}{\psi\to\chi}{\rho_F}$.

  \medskip\noindent
  \textit{$\triangleright\;$ Case for $\phi = \forall x\,\psi$.}
    Suppose that
    $\{ i \in I \mid \satisfy{\mo{M}_i}{\alpha(i)}{\forall x\,\psi}{\rho_i} \} \notin F$. 
    Since $F$ is an ultrafilter, 
    \begin{equation*}
      S \coloneq \{ i \in I \mid \notsatisfy{\mo{M}_i}{\alpha(i)}{\forall x\,\psi}{\rho_i}\} \in F
    \end{equation*}
    and hence for every $i \in S$ there exist a world $b_i \in W_i$ and a domain
    element $d_i \in D_{i,b_i}$ such that $\alpha(i) \leq_i b_i$
    and $\notsatisfy{\mo{M}_i}{b_i}{\psi}{\rho_i[x \coloneq d_i]}$.
    Define $\beta \in \prod_{i \in I} W_i$ by
    \begin{align*}
      \beta(i)
        = \begin{cases}
            b_i &\text{if } i \in S \\
            \alpha(i) &\text{if } i \notin S
          \end{cases}
    \end{align*}
    and $\xi^d \in \prod_{i \in I} D_i$ by
    \begin{align*}
      \xi^d(i)
        = \begin{cases}
            d_i   &\text{if } i \in S \\
            d^*_i &\text{if } i \notin S, \text{where } d^*_i \text{ is any element of } D_{i,\beta(i)}
          \end{cases}
    \end{align*}
    Then $\beta_F \in W$ and $\xi^d_F \in D_{\beta_F}$, because
    $S \subseteq \{ i \in I \mid \xi^d(i) \in D_{i, \alpha(i)} \} \in F$.
    Since $S \subseteq \{ i \in I \mid \alpha(i) \leq_i \beta(i) \} \in F$
    we have $\alpha_F \leq_F \beta_F$.
    Furthermore, by construction we have 
    \begin{equation*}
      S \subseteq \{ i \in I \mid \notsatisfy{\mo{M}_i}{\beta(i)}{\psi}{\rho_i[x \coloneq \xi^d(i)]} \} \in F
    \end{equation*}
    so $\{ i \in I \mid \satisfy{\mo{M}_i}{\beta(i)}{\psi}{\rho_i[x \coloneq \xi^d(i)]} \} \not\in F$.
    Using the fact that $\rho_F[x \coloneq \xi^d_F]$ is the product of
    the assignments $\rho_i[x \coloneq \xi^d(i)]$ and the induction hypothesis
    we obtain $\notsatisfy{\mo{M}}{\beta_F}{\psi}{\rho_F[x \coloneq \xi^d_F]}$.
    Therefore $\notsatisfy{\mo{M}}{\alpha_F}{\forall x\, \psi}{\rho_F}$.

    For the converse, suppose that $\notsatisfy{\mo{M}}{\alpha_F}{\forall x\, \psi}{\rho_F}$. 
    Then there exist a world $\beta_F \in W$ and an individual $\xi_F \in D_{\beta_F}$
    such that $\alpha_F \leq_F \beta_F$
    and $\notsatisfy{\mo{M}}{\beta_F}{\psi}{\rho_F[x \coloneq \xi_F]}$. 
    This implies $\{ i \in I \mid \alpha(i) \leq_i \beta(i) \} \in F$
    and $\{ i \in I \mid \xi(i) \in D_{i,\beta(i)}\} \in F$
    and (by the induction hypothesis)
    $\{ i \in I \mid \satisfy{\mo{M}_i}{\beta(i)}{\psi}{\rho_i[x \coloneq \xi(i)]} \} \notin F$.
    Since $F$ is an ultrafilter, we get
    $\{ i \in I \mid \notsatisfy{\mo{M}_i}{\beta(i)}{\psi}{\rho_i[x \coloneq \xi(i)]} \} \in F$.
    It follows that
    \begin{align*}
      \{ i \in I \mid \alpha(i) \leq_i \beta(i) \}
        &\cap \{ i \in I \mid \xi(i) \in D_{i,\beta(i)} \} \\
        &\cap \{ i \in I \mid \notsatisfy{\mo{M}_i}{\beta(i)}{\psi}{\rho_i[x \coloneq \xi(i)]} \} 
         \subseteq \{ i \in I \mid \notsatisfy{\mo{M}_i}{\alpha(i)}{\forall x\,\psi}{\rho_i}\} \in F
    \end{align*}
    and hence $\{ i \in I \mid \satisfy{\mo{M}_i}{\alpha(i)}{\forall x\,\psi}{\rho_i} \} \notin F$.
    
  \medskip\noindent
  \textit{$\triangleright\;$ Case for $\phi = \exists x\,\psi$.}
    Suppose $\satisfy{\mo{M}}{\alpha_F}{\exists x\,\psi}{\rho_F}$.
    Then there exists some individual $\xi_F \in D_{\alpha_F}$ such that
    $\satisfy{\mo{M}}{\alpha_F}{\psi}{\rho_F[x \coloneq \xi_F]}$.
    By the induction hypothesis, and using the fact that
    $\rho_F[x \coloneq \xi_F]$ is the product of the assignments $\rho_i[x \coloneq \xi(i)]$,
    we obtain
    $\{ i \in I \mid \satisfy{\mo{M}_i}{\alpha(i)}{\psi}{\rho_i[x \coloneq \xi(i)]} \} \in F$.
    Clearly
    \begin{align*}
      \{ i \in I \mid \satisfy{\mo{M}_i}{\alpha(i)}{\psi}{\rho_i[x \coloneq \xi(i)]}\}
        \subseteq \{ i \in I \mid \satisfy{\mo{M}_i}{\alpha(i)}{\exists x\,\psi}{\rho_i} \},
    \end{align*}
    so $\{ i \in I \mid \satisfy{\mo{M}_i}{\alpha(i)}{\exists x\, \psi}{\rho_i} \} \in F$.
  
    Now suppose
    $S \coloneq \{ i \in I \mid \satisfy{\mo{M}_i}{\alpha(i)}{\exists x\,\psi}{\rho_i}\} \in F$. 
    Then, for every $i \in S$ there exists a $d_i \in D_{i,\alpha(i)}$
    such that $\satisfy{\mo{M}_i}{\alpha(i)}{\psi}{\rho_i[x \coloneq d_i]}$.
    Define $\xi^d \in \prod_{i \in I} D_i$ by
    \begin{equation*}
      \xi^d(i)
        = \begin{cases}
            d_i   &\text{if } i \in S \\
            d^*_i &\text{if } i \not\in S, \text{where } d^*_i \text{ is any element of } D_{i,\alpha(i)}
          \end{cases}
    \end{equation*}
    Then $\xi^d_F \in D_{\alpha_F}$ and
    \begin{equation*}
      S \subseteq \{ i \in I \mid \satisfy{\mo{M}_i}{\alpha(i)}{\psi}{\rho_i[x \coloneq \xi^d(i)]} \},
    \end{equation*}
    so $\{ i \in I \mid \satisfy{\mo{M}_i}{\alpha(i)}{\psi}{\rho_i[x \coloneq \xi^d(i)]} \} \in F$
    and by the induction hypothesis $\satisfy{\mo{M}}{\alpha_F}{\psi}{\rho_F[x \coloneq \xi^d_F]}$.
\end{proof}

\bibliographystyle{eptcs}
\bibliography{biblio}

\end{document}